\newcommand{\labeltarget}[1]{\Hy@raisedlink{\hypertarget{#1}{}}}
\newtheorem{theorem}{Theorem}[section]
\newtheorem{lemma}[theorem]{Lemma}
\newtheorem{proposition}[theorem]{Proposition}
\newtheorem{remark}[theorem]{Remark}
\newtheorem{proof}[theorem]{Proof}
\numberwithin{equation}{section}
		\title{Existence of sign-changing solution for a problem involving the fractional Laplacian with critical growth nonlinearities}
		   \date{}
		   \author{
		   	R.F. Gabert\thanks{R.F. Gabert was supported 
		   		in part by  CAPES. E-mail: rodrigogabert@dm.ufscar.br}; \,
		   	{R.S. Rodrigues\thanks{E-mail: rodrigo@dm.ufscar.br}}}
\begin{document}
		\pretolerance10000 %it does not allow separation syllabic.

	\maketitle

	\begin{abstract}
		
	We study the existence of least energy sign-changing solution for the fractional equation $ (-\Delta)^{s} u= |u|^{2_{s}^{*}-2}u+\lambda f(x,u) $ in a smooth bounded domain $ \Omega $ of $ \mathbb{R}^{N} $, $ u=0 $ in $ \mathbb{R}^{N}\setminus \Omega $, where $ s\in (0,1) $ and $ 2_{s}^{*} $ is the fractional critical Sobolev exponent. The proof is based on constrained minimization in a subset of Nehari manifold, containing all
	the possible sign-changing solutions of the equation.
		
	\end{abstract}

	{\scriptsize \noindent{\bf Keywords}
	{Fractional Laplacian, sign-changing solutions, critical exponents.}}

{\scriptsize \noindent{\bf 2010 Mathematical Subject Classifications:}
	49J35, 35S15, and 35B33.}

\section{Introduction}\label{Section1}

In this paper, we establish the existence of a least energy sign-changing (nodal) solution of following problem:

\begin{equation}\label{Equa1}
\left \{
\begin{array}{ll}
(-\Delta)^s u=|u|^{2^{*}_{s}-2}u+\lambda f(x,u) & \operatorname{in}\, \Omega, \\
u=0  &\operatorname{in}\,  \mathbb{R}^{N} \setminus \Omega,\\
\end{array}
\right.
\end{equation}
where $ \Omega \subset \mathbb{R}^{N} $ is a smooth bounded domain, $ s\in (0,1) $, $ 2s<N $, $ \lambda >0 $ and $ 2_{s}^{*}=\frac{2N}{N-2s} $ is the fractional critical Sobolev exponent.

Let  $ \mathscr{S}(\mathbb{R}^{N}) $ be the Schwartz space.
If $ u\in \mathscr{S}(\mathbb{R}^{N}) $, we define the fractional Laplacian by
\begin{equation*}
(-\Delta)^{s}u(x):=c(N,s)\lim_{\epsilon \rightarrow 0}\int_{\mathbb{R}^{N}\setminus B(x,\epsilon)}\frac{u(x)-u(y)}{|x-y|^{N+2s}}dx dy,\, x\in \mathbb{R}^{N},
\end{equation*} 
where $ C(N,s) $ is a positive normalizing constant.

We assume that $ f:\overline{\Omega}\times \mathbb{R} \rightarrow \mathbb{R} $ is continuous so that $ f(x,\cdotp): \mathbb{R} \rightarrow \mathbb{R} $ is continuously differentiable, for a.e. $ x\in \Omega $ and satisfies the following assumptions:

\begin{itemize}
\item[(H1)] \labeltarget{1} There exist $ q\in (2,2_{s}^{*}) $ and $ c_{1}>0 $ such that 
\begin{equation*}
|\partial_{t}f(x,t)|\leq c_{1}(1+|t|^{q-2}),
\end{equation*}
for a.e. $ x\in \Omega $ and all $ t\in \mathbb{R} $.
\item[(H2)] \labeltarget{2} $ \lim\limits_{t\rightarrow 0}\frac{f(x,t)}{t}=0 $, uniformly for a.e. $ x\in \Omega $.
\item[(H3)] \labeltarget{3} There exist $ \mu \in(2,2_{s}^{*})$ such that 
\begin{equation*}
0<\mu F(x,t)\leq t f(x,t), 
\end{equation*}
for a.e. $ x\in \Omega $ and all $  t\neq 0 $, where $ F(x,t):=\int_{0}^{t}f(x,\tau) d\tau  $. 
\item[(H4)] \labeltarget{4} $ \frac{f(x,t)}{t} $ is increasing in $ t>0 $ and decreasing in $ t<0 $, for a.e. $ x \in \Omega $.
\end{itemize}

\begin{remark}\label{Obs1}
Note that from assumptions \hyperlink{3}{(H3)} and \hyperlink{4}{(H4)}, we can conclude that the function $ H(x,t):= t f(x,t)-2 F(x,t) $ is non-negative, increasing in $ t>0 $ and decreasing in $ t<0 $. Thus,
\begin{equation*}
t^{2}\partial _{t} f(x,t)-t f(x,t)= \partial _{t}H(x,t)t >0,
\end{equation*} 
for a.e. $  x\in \Omega $ and all $ t\neq 0 $.
\end{remark}

Recently, in literature, many works have been published involving nonlocal operators such as the fractional Laplacian operator, which get existence, non-existence and regularity results and, also, obtain qualitative properties of the solutions. In \cite{H}, a pioneer work, Caffarelli and Silvestre have given an approach local to the fractional Laplacian operator across $ s $-harmonic extension technique. This paper motived other authors to produce works involving this operator, establishing similar results to classic results obtained for the Laplacian operator. See, for instance, \cite{O}, \cite{P}, \cite{Q}, \cite{R}, and \cite{S}. On the other hand, Servadei and Valdinoci, in \cite{T} and \cite{U}, work with the fractional Laplacian as singular integral operator, where they presented a suitable fractional Sobolev space and a variational formulation. From this approach, a great amount of papers was developed involving fractional problems, see for example \cite{AB}, \cite{AA}, \cite{AC}, and references therein, among others. In particular, we use this last approach in our article.

The fractional Laplacian operator appears in diverse areas such as mathematical finances, quantum mechanics, water waves, phase transition, minimal surface, population dynamics, optimal control, game theory, L\'{e}vy processes in probability theory, among others. For more details about these subjects and them applications see \cite{A}, \cite{B}, \cite{L}, \cite{M}, \cite{N} and the references therein.

We will present hereinafter the fractional Sobolev space and the weak formulation of problem \eqref{Equa1}. For any measurable function $ u:\mathbb{R}^{N}\rightarrow \mathbb{R} $, we define the Gagliardo seminorm by setting
\begin{equation*}
[u]_{s,2}:= \left(\int_{\mathbb{R}^{N}}\frac{|u(x)-u(y)|^{2}}{|x-y|^{N+2s}}dx dy \right)^{\frac{1}{2}}
\end{equation*}
and, the fractional Sobolev space, we define by 
\begin{equation*}
W^{s,2}(\mathbb{R}^{N}):=\{u \in L^{2}(\mathbb{R}^{N}); [u]_{s,2}<\infty\},
\end{equation*}
provided with the norm
\begin{equation*}
\|u\|_{W^{s,2}(\mathbb{R}^{N})}:=\left(\|u\|_{L^{2}(\mathbb{R}^{N})}^{2}+[u]_{s,2}^{2}\right)^{\frac{1}{2}},
\end{equation*}
where $ \|\cdotp\|_{L^{2}(\mathbb{R}^{N})} $ is the norm in $ L^{2}(\mathbb{R}^{N}) $ (see \cite{D} for more details).

This space, with the inner product
\begin{equation*}
\langle u,v\rangle_{W^{s,2}}:=\langle u,v\rangle_{L^{2}(\mathbb{R}^{N})}+\int_{\mathbb{R}^{2N}}\frac{(u(x)-u(y))(v(x)-v(y))}{|x-y|^{N+2s}}dxdy,
\end{equation*}
is a Hilbert space, where
\begin{equation*}
\langle u,v\rangle_{L^{2}(\mathbb{R}^{N})}:=\int_{\mathbb{R}^{N}}uv dx .
\end{equation*}

In the course of text, we use the notation 
\begin{equation*}
W^{s,2}(\mathbb{R}^{N}):=\mathbb{H}^{s}(\mathbb{R}^{N}).
\end{equation*}

Since our problem involves a bounded domain $ \Omega \subset \mathbb{R}^{N} $, we will introduce the space

\begin{equation*}
\mathbb{H}_{0}^{s}(\Omega):=\{u\in \mathbb{H}^{s}(\mathbb{R}^{N});\, u(x)=0, \,\operatorname{a.e.}\, x\in \Omega \setminus\mathbb{R}^{N} \},
\end{equation*}
which is provided with the inner product 
\begin{equation*}
\langle u,v\rangle_{\mathbb{H}_{0}^{s}(\Omega)}:=\int_{\mathbb{R}^{2N}}\frac{(u(x)-u(y))(v(x)-v(y))}{|x-y|^{N+2s}}dx dy .
\end{equation*}

In the space $ \mathbb{H}_{0}^{s}(\Omega) $, this inner product produces a norm $ \|\cdot\|_{\mathbb{H}_{0}^{s}(\Omega)}=[\,\cdot\,]_{s,2} $, which is equivalent to norm $ \|\cdot\|_{\mathbb{H}^{s}(\mathbb{R}^{N})} $. By \cite[Lemma 1.31]{F}, the embedding $ \mathbb{H}_{0}^{s}(\Omega) \hookrightarrow L^{r}(\Omega)$ is continuous, for all $ r\in [1,2_{s}^{*}] $ and compact for all $ r\in [1,2_{s}^{*}) $. We denote by
\begin{equation}\label{EquabestC}
S_{s}:=\inf _{u\in \mathbb{H}_{0}^{s}(\Omega)\setminus \{0\}}\frac{\|u\|_{\mathbb{H}_{0}^{s}(\Omega)}^{2}}{\|u\|_{L^{2_{s}^{*}}(\Omega)}^{2}}
\end{equation}
the best constant corresponding to the fractional Sobolev embedding when $ r=2_{s}^{*} $.

We say that a function $ u\in \mathbb{H}_{0}^{s}(\Omega) $ is a weak solution of the problem \eqref{Equa1}, if
\begin{equation*}
\int_{\mathbb{R}^{2N}}\frac{(u(x)-u(y))(v(x)-v(y))}{|x-y|^{N+2s}}dx dy=\int_{\Omega}|u|^{2_{s}^{*}-2}u v dx+\lambda \int_{\Omega}f(x,u)v dx ,
\end{equation*}
for all $ v \in \mathbb{H}_{0}^{s}(\Omega) $.

From this weak formulation, we  infer that the weak solutions of \eqref{Equa1} are precisely the critical points of the energy functional
\begin{equation*}
I_{\lambda}(u):= \frac{1}{2}\|u\|_{\mathbb{H}_{0}^{s}(\Omega)}^{2}- \frac{1}{2_{s}^{*}}\int _{\Omega}|u|^{2_{s}^{*}}dx- \lambda \int _{\Omega}F(x,u)dx,
\end{equation*}
whose derivative is given by
\begin{equation*}
\langle I_{\lambda}'(u),v\rangle=\int _{\mathbb{R}^{2N}}\frac{(u(x)-u(y))(v(x)-v(y))}{|x-y|^{N+2s}}dx dy- \int _{\Omega} |u|^{2_{s}^{*}-2}uv dx-\lambda \int _{\Omega}f(x,u)v dx .
\end{equation*}

We say that $ u\in \mathbb{H}_{0}^{s}(\Omega) $ is a least energy sign-changing solution of problem \eqref{Equa1} if $ u $ is a weak solution of \eqref{Equa1} with $ u^{+}\neq 0 $, $ u^{-}\neq 0 $ and  
\begin{equation*}
I_{\lambda}(u)=\inf \{I_{\lambda}(v);I'_{\lambda}(v)=0, v^{+}\neq 0, v^{-}\neq 0\},
\end{equation*}
where $ u^{+}(x)=\max \{u(x),0\} $ and $ u^{-}(x)=\min \{u(x),0\} $.

Our main result is the following:
\begin{theorem}\label{Theo1}
Suppose \hyperlink{1}{(H1)}-\hyperlink{4}{(H4)} hold. Then, there exist $ \lambda^{*}>0 $ such that for all $ \lambda \geq\lambda^{*} $, problem \eqref{Equa1} has a least energy sign-changing solution. 
\end{theorem}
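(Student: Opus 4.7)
The plan is to follow the constrained minimization approach on the sign-changing Nehari set, adapted to the fractional setting (where the absence of local decomposition $[u]_{s,2}^2 \neq [u^+]_{s,2}^2 + [u^-]_{s,2}^2$ is a central technical nuisance), and to force the energy level below the critical threshold by taking $\lambda$ large. Concretely, set
\[
\mathcal{M}_{\lambda}:=\bigl\{u\in\mathbb{H}_{0}^{s}(\Omega):\ u^{\pm}\neq 0,\ \langle I_{\lambda}'(u),u^{+}\rangle =\langle I_{\lambda}'(u),u^{-}\rangle=0\bigr\},
\]
and $m_{\lambda}:=\inf_{\mathcal{M}_{\lambda}}I_{\lambda}$. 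I would first prove a fibering lemma: for every $u$ with $u^{\pm}\neq 0$, the map $\Phi_{u}(s,t):=I_{\lambda}(su^{+}+tu^{-})$ has a unique critical point $(s_{u},t_{u})\in(0,\infty)^{2}$ which is its global maximum and satisfies $s_{u}u^{+}+t_{u}u^{-}\in\mathcal{M}_{\lambda}$. Uniqueness is the first technical hurdle because the quadratic part of $\Phi_{u}$ carries the cross terms
\[
B(u^{+},u^{-}):=\int_{\mathbb{R}^{2N}}\frac{(u^{+}(x)-u^{+}(y))(u^{-}(x)-u^{-}(y))}{|x-y|^{N+2s}}\,dx\,dy,
\]
which is strictly negative (since $u^{+}u^{-}\equiv 0$ makes the integrand nonpositive), but one exploits exactly this sign together with the monotonicity in \hyperlink{4}{(H4)} and Remark \ref{Obs1} to run the standard contradiction argument.

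Next I would establish the basic structure of $\mathcal{M}_{\lambda}$. Using \hyperlink{1}{(H1)}, \hyperlink{2}{(H2)} and the Sobolev embedding, any $u\in\mathcal{M}_{\lambda}$ satisfies $\|u^{\pm}\|_{\mathbb{H}_{0}^{s}(\Omega)}\geq c>0$, so $m_{\lambda}>0$. Using \hyperlink{3}{(H3)} (Ambrosetti–Rabinowitz) one rewrites
\[
I_{\lambda}(u)=\Bigl(\tfrac{1}{2}-\tfrac{1}{\mu}\Bigr)\|u\|_{\mathbb{H}_{0}^{s}(\Omega)}^{2}+\Bigl(\tfrac{1}{\mu}-\tfrac{1}{2_{s}^{*}}\Bigr)\int_{\Omega}|u|^{2_{s}^{*}}\,dx+\lambda\int_{\Omega}\Bigl(\tfrac{1}{\mu}f(x,u)u-F(x,u)\Bigr)dx,
\]
and any minimizing sequence is bounded in $\mathbb{H}_{0}^{s}(\Omega)$.

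The key step is the energy estimate $m_{\lambda}<\tfrac{s}{N}S_{s}^{N/(2s)}$, the threshold below which the compactness of Palais–Smale sequences holds for critical fractional problems. Here I would fix any $u_{0}$ with $u_{0}^{\pm}\neq 0$, obtain $(s_{\lambda},t_{\lambda})$ with $s_{\lambda}u_{0}^{+}+t_{\lambda}u_{0}^{-}\in\mathcal{M}_{\lambda}$, and observe that from \hyperlink{3}{(H3)} the superquadratic term $\lambda\int F(x,\cdot)\,dx$ pushes the maximum of $\Phi_{u_{0}}$ arbitrarily low as $\lambda\to\infty$; this yields $\lambda^{*}>0$ such that $m_{\lambda}<\tfrac{s}{N}S_{s}^{N/(2s)}$ for all $\lambda\geq\lambda^{*}$. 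This is the decisive place where the hypothesis $\lambda\geq\lambda^{*}$ enters.

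With the level estimate in hand I would extract a bounded minimizing sequence $\{u_{n}\}\subset\mathcal{M}_{\lambda}$ with $u_{n}\rightharpoonup u_{\lambda}$, use the compact embedding $\mathbb{H}_{0}^{s}(\Omega)\hookrightarrow L^{r}(\Omega)$ for $r<2_{s}^{*}$ to handle the subcritical $f$-terms, and combine Brezis–Lieb with the threshold bound to rule out concentration in either $u_{n}^{+}$ or $u_{n}^{-}$; hence $u_{\lambda}^{\pm}\neq 0$. Testing $\langle I_{\lambda}'(u_{n}),u_{n}^{\pm}\rangle=0$ against the weak limit, and using \hyperlink{4}{(H4)} to rule out a loss of mass in the Nehari identities, gives $u_{\lambda}\in\mathcal{M}_{\lambda}$ and $I_{\lambda}(u_{\lambda})=m_{\lambda}$. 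The final step is to promote the minimizer to a critical point of $I_{\lambda}$: if $I_{\lambda}'(u_{\lambda})\neq 0$, a quantitative deformation lemma applied to a small deformation of $s u_{\lambda}^{+}+t u_{\lambda}^{-}$ near $(1,1)$, combined with a degree-theoretic argument on the map $(s,t)\mapsto(\langle I_{\lambda}'(su_{\lambda}^{+}+tu_{\lambda}^{-}),su_{\lambda}^{+}\rangle,\langle I_{\lambda}'(\cdot),tu_{\lambda}^{-}\rangle)$, produces a point in $\mathcal{M}_{\lambda}$ with energy strictly below $m_{\lambda}$, a contradiction.

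The main obstacle I expect is the pair of issues created by the nonlocality of $(-\Delta)^{s}$: the cross-term $B(u^{+},u^{-})<0$ forces one to verify uniqueness/maximality of the fibering map and stability of the Nehari identities under weak convergence without the clean orthogonal decomposition available for the local Laplacian, and, simultaneously, the critical nonlinearity requires a delicate Brezis–Lieb-type splitting argument to transfer the global energy estimate $m_{\lambda}<\tfrac{s}{N}S_{s}^{N/(2s)}$ into separate nonvanishing of $u_{\lambda}^{+}$ and $u_{\lambda}^{-}$.
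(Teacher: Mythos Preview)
Your plan is essentially the paper's proof: constrained minimization on $\mathcal{M}_{\lambda}$, a fibering/maximum lemma (the paper's Lemmas~\ref{lem3} and~\ref{Lem6}), the threshold estimate $\gamma_{\lambda}<\tfrac{s}{N}S_{s}^{N/(2s)}$ for large $\lambda$ (Proposition~\ref{Prop1}), Brezis--Lieb splitting to exclude concentration separately in $u^{+}$ and $u^{-}$, and finally the deformation/degree argument of \cite{E}.

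Two small corrections are worth flagging. First, the cross term satisfies $B(u^{+},u^{-})=\langle u^{+},u^{-}\rangle_{\mathbb{H}_{0}^{s}(\Omega)}>0$, not $<0$: since $u^{+}u^{-}\equiv 0$ pointwise, the integrand equals $-u^{+}(x)u^{-}(y)-u^{+}(y)u^{-}(x)\geq 0$ (and is strictly positive on a set of positive measure when both parts are nontrivial). Second, the paper does \emph{not} argue that the weak limit $u$ itself lies in $\mathcal{M}_{\lambda}$; weak lower semicontinuity of the Gagliardo seminorm only yields $\langle I_{\lambda}'(u),u^{\pm}\rangle\leq 0$, so ``testing the Nehari identities against the weak limit'' does not close. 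Instead the paper projects $u$ onto $\mathcal{M}_{\lambda}$ via the fibering pair $(t_{u},\theta_{u})$ from Lemma~\ref{lem3}, applies Lemma~\ref{Lem6}(i) to each $u_{j}\in\mathcal{M}_{\lambda}$ to get $I_{\lambda}(t_{u}u_{j}^{+}+\theta_{u}u_{j}^{-})\leq I_{\lambda}(u_{j})$, and then uses the already established $L^{2_{s}^{*}}$ convergence ($B^{\pm}=0$) plus lower semicontinuity to conclude $I_{\lambda}(t_{u}u^{+}+\theta_{u}u^{-})=\gamma_{\lambda}$. The actual minimizer is thus $t_{u}u^{+}+\theta_{u}u^{-}$, not necessarily $u$.
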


A standard method to find sign-changing solution is the application of minimax arguments in invariant sets of the descending flow (see \cite{Y}, \cite{Z}, \cite{K}), as well as the Lusternik-Schnirelman method (see \cite{X}, \cite{W}). However, these techniques require that the energy functional $ J $ in question decomposes $ u^{+} $ and $ u^{-} $ as follows:
\begin{equation*}
J(u)=J(u^{+})+J(u^{-}),
\end{equation*}
\begin{equation*}
\langle J'(u),u^{+}\rangle=\langle J'(u^{+}),u^{+}\rangle\; \operatorname{and}\; \langle J'(u),u^{-}\rangle=\langle J'(u^{-}),u^{-}\rangle.
\end{equation*}
In $ \mathbb{H}_{0}^{s}(\Omega) $, the functions $ u^{+}$ and $ u^{-}$ are not orthogonal, when both are non-trivial, because
\begin{equation*}
\langle u^{+},u^{-}\rangle _{\mathbb{H}_{0}^{s}(\Omega)}= -2\int _{\mathbb{R}^{2N}}\frac{u^{+}(x)u^{-}(y)}{|x-y|^{N+2s}}dx dy>0.
\end{equation*}
Hence, the energy functional $ I_{\lambda} $ does not satisfy this decomposition, verifying only the inequalities
\begin{equation*}
I_{\lambda}(u)>I_{\lambda}(u^{+})+I_{\lambda}(u^{-}),
\end{equation*}
\begin{equation*}
\langle I'_{\lambda}(u),u^{+}\rangle>\langle I'_{\lambda}(u^{+}),u^{+}\rangle\; \operatorname{and}\; \langle I'_{\lambda}(u),u^{-}\rangle>\langle I'_{\lambda}(u^{-}),u^{-}\rangle.
\end{equation*}
Thus, the techniques become unsuitable for the problem \eqref{Equa1}. Motived by \cite{E}, our strategy is to find a minimizer for $ I_{\lambda} $ in the set
\begin{equation*}
\mathcal{M}_{\lambda}:=\{u\in \mathbb{H}_{0}^{s}(\Omega); \langle I_{\lambda}'(u),u^{+}\rangle=\langle I_{\lambda}'(u),u^{-}\rangle=0, u^{+}\neq 0, u^{-}\neq 0\},
\end{equation*}
which contains all the possible sign-changing solutions of \eqref{Equa1} and is a subset of Nehari Manifold associated with functional $ I_{\lambda},$ namely
\begin{equation*}
\mathcal{N}_{\lambda}:=\{u\in \mathbb{H}_{0}^{s}(\Omega); \langle I_{\lambda}'(u),u\rangle=0, u\neq 0\}.
\end{equation*}

In \cite{V}, the authors also used this strategy to find a sign-changing solution for a problem involving the fractional Laplacian, but with non-critical nonlinearities. In our work, since appears a term with critical exponent in the nonlinearity, there is a difficulty to proof that the infimum 
\begin{equation*}
\gamma_{\lambda}:=\inf _{u\in \mathcal{M}_{\lambda}} I_{\lambda}(u)
\end{equation*}
is achieved in $ \mathcal{M}_{\lambda} $, because is not immediate that the minimizing sequence has convergent subsequence in $ L^{2_{s}^{*}}(\Omega) $, once the embedding $ \mathbb{H}_{0}^{s}(\Omega)\hookrightarrow L^{2_{s}^{*}}(\Omega) $, in general, is not compact, and this is essential in the proof. So, to obtain this convergence, we need to lower the level $ \gamma _{\lambda} $ for a level beneath of $\beta := \frac{s}{N}S_{s}^{\frac{N}{2s}} $, which is possible when $ \lambda $ is big enough.

In Section \ref{Section2}, we will present some results which are consequences of ours assumptions and others which are essential in proof of main result. Proof of the main result will be given in Section \ref{Section3}.

\section{Auxiliaries results}\label{Section2}

\begin{lemma}\label{Lem1}
Suppose that $ f: \overline{\Omega}\times \mathbb{R} \rightarrow \mathbb{R} $ satisfies the conditions \hyperlink{1}{(H1)}-\hyperlink{2}{(H2)}. Then, for any $ \epsilon >0 $, there exist $ \delta(\epsilon)>0 $ such that
\begin{equation*}
|f(x,t)|\leq \epsilon |t|+ \delta (\epsilon) |t|^{q-1},
\end{equation*}
for a.e. $  x\in \Omega $ and all $ t\in \mathbb{R} $.
\end{lemma}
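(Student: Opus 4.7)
The plan is to combine the two assumptions in the standard way: use (H2) to control $f$ near $t=0$, and use (H1) to control $f$ for $|t|$ bounded away from $0$, then patch the two estimates together.

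First I would observe that (H2) implies $f(x,0)=0$ for a.e.\ $x\in\Omega$ (as the limit of $f(x,t)/t\cdot t$ at $t=0$), and moreover that for every $\epsilon>0$ there exists $\delta_1=\delta_1(\epsilon)>0$ with
\[
|f(x,t)|\le \epsilon |t| \qquad\text{whenever } |t|\le \delta_1, \text{ a.e. } x\in\Omega.
\]
Next, by the fundamental theorem of calculus applied in the $t$-variable together with (H1),
\[
|f(x,t)|=\left|\int_0^t \partial_\tau f(x,\tau)\,d\tau\right|\le c_1\int_0^{|t|}(1+|\tau|^{q-2})\,d\tau= c_1|t|+\frac{c_1}{q-1}|t|^{q-1},
\]
for a.e.\ $x\in\Omega$ and all $t\in\mathbb{R}$.

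To conclude, I would split the argument according to whether $|t|\le\delta_1$ or $|t|>\delta_1$. In the first regime the inequality $|f(x,t)|\le\epsilon|t|\le \epsilon|t|+\delta(\epsilon)|t|^{q-1}$ is immediate. In the second regime I use that $|t|\le |t|^{q-1}\delta_1^{2-q}$ (since $q>2$), so the previous display yields
\[
|f(x,t)|\le c_1\delta_1^{2-q}|t|^{q-1}+\frac{c_1}{q-1}|t|^{q-1}=: M(\epsilon)|t|^{q-1}.
\]
Setting $\delta(\epsilon):=\max\{M(\epsilon),\, c_1/(q-1)\}$ (or simply $M(\epsilon)$ plus the constant from the first regime if one prefers a cleaner bound) produces the desired estimate $|f(x,t)|\le\epsilon|t|+\delta(\epsilon)|t|^{q-1}$ uniformly in $x$ and $t$.

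There is no real obstacle here; this is a routine consequence of the growth and asymptotic conditions, and the only subtlety is being careful that $\delta(\epsilon)$ depends only on $\epsilon$ (through $\delta_1(\epsilon)$) and on the fixed constants $c_1$ and $q$.
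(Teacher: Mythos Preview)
Your proof is correct and follows essentially the same route as the paper: use (H2) to get $|f(x,t)|\le\epsilon|t|$ for $|t|$ small, and for $|t|$ bounded below use (H1) together with the substitution $1\le \delta_1^{2-q}|t|^{q-2}$ to absorb the linear growth into the $|t|^{q-1}$ term. The only cosmetic difference is that the paper first bounds $|\partial_t f(x,t)|\le \delta'(\sigma)|t|^{q-2}$ on $\{|t|\ge\sigma\}$ and then integrates, whereas you integrate (H1) globally to obtain $|f(x,t)|\le c_1|t|+\tfrac{c_1}{q-1}|t|^{q-1}$ and only afterwards convert the linear term; this reordering is immaterial.
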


\begin{proof}
Fixed $ \epsilon>0 $, by \hyperlink{2}{(H2)}, there exist $ \sigma(\epsilon) >0 $ such that if $ |t|< \sigma$, then
\begin{equation}\label{Equa2}
|f(x,t)|<\epsilon |t|,
\end{equation}
for a.e. $  x\in \Omega $. On the other hand, by \hyperlink{1}{(H1)}, if $ |t|\geq \sigma $, then 
\begin{eqnarray}\label{Equa3}
|\partial _{t}f(x,t)|&\leq &c_{1} (1+|t|^{q-2})\nonumber\\
&\leq& c_{1}(\sigma ^{2-q}|t|^{q-2}+|t|^{q-2})\nonumber\\
&\leq&\delta'(\sigma)|t|^{q-2},
\end{eqnarray}
for a.e. $  x\in \Omega $ and all $ t\in \mathbb{R} $, where $ \delta '(\sigma)=c_{1}(\sigma^{2-q}+1)>0 $.
By integrating from $ 0 $ to $ t $ in  \eqref{Equa3}, we get
\begin{equation}\label{Equa4}
|f(x,t)|\leq \delta(\sigma)|t|^{q-1},
\end{equation}
for a.e. $  x\in \Omega $ and all $ t\in \mathbb{R} $.
Therefore, from \eqref{Equa2} and \eqref{Equa4}, it follows that
\begin{equation*}
|f(x,t)|\leq \epsilon |t|+\delta(\epsilon)|t|^{q-1},
\end{equation*} 
for a.e. $  x\in \Omega $ and all $ t\in \mathbb{R} $.

\end{proof}

\begin{lemma}\label{Lem2}
Suppose that $ f:\overline{\Omega} \times \mathbb{R} \rightarrow \mathbb{R} $ satisfies \hyperlink{3}{(H3)}. Then, there are $ c_{1}, \, c_{2}>0 $ such that
\begin{equation*}
F(x,t)\geq c_{1}|t|^{\mu}- c_{2},
\end{equation*}
for a.e. $  x\in \Omega $ and all $ t\in \mathbb{R} $.
\end{lemma}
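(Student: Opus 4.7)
The plan is to obtain the classical Ambrosetti--Rabinowitz superlinear lower bound directly from \hyperlink{3}{(H3)}, via a computation that is independent of the fractional setting. The driving observation is that, for fixed $x\in\overline{\Omega}$ and $t>0$, the auxiliary ratio $g(t):=F(x,t)/t^{\mu}$ satisfies
\[
g'(t)=\frac{t\,f(x,t)-\mu\,F(x,t)}{t^{\mu+1}}\geq 0
\]
by \hyperlink{3}{(H3)}, so $g$ is non-decreasing on $(0,\infty)$. In particular $F(x,t)\geq F(x,1)\,t^{\mu}$ for every $t\geq 1$. The symmetric computation on $(-\infty,0)$, using $\frac{d}{dt}(-t)^{-\mu}=\mu(-t)^{-\mu-1}$, yields
\[
\frac{d}{dt}\!\left(\frac{F(x,t)}{(-t)^{\mu}}\right)=\frac{-t\,f(x,t)+\mu\,F(x,t)}{(-t)^{\mu+1}}\leq 0,
\]
since $tf(x,t)\geq\mu F(x,t)$ implies $-tf(x,t)+\mu F(x,t)\leq 0$. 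Hence $F(x,t)/|t|^{\mu}$ is non-increasing on $(-\infty,0)$, and consequently $F(x,t)\geq F(x,-1)\,|t|^{\mu}$ for every $t\leq -1$.

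The next step is to make these two bounds uniform in $x$. Continuity of $f$ on $\overline{\Omega}\times\mathbb{R}$ propagates to $F$, and \hyperlink{3}{(H3)} applied at $t=\pm 1$ forces $F(x,\pm 1)>0$ pointwise. The positive continuous functions $x\mapsto F(x,\pm 1)$ therefore attain strictly positive minima on the compact set $\overline{\Omega}$. Setting
\[
c_{1}:=\min\!\left\{\min_{x\in\overline{\Omega}}F(x,1),\;\min_{x\in\overline{\Omega}}F(x,-1)\right\}>0,
\]
we obtain $F(x,t)\geq c_{1}|t|^{\mu}$ whenever $|t|\geq 1$.

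Finally, the range $|t|\leq 1$ is handled trivially: \hyperlink{3}{(H3)} gives $F(x,t)\geq 0$ for every $t$, while $c_{1}|t|^{\mu}\leq c_{1}$ on this range, so choosing $c_{2}:=c_{1}$ yields $F(x,t)\geq 0\geq c_{1}|t|^{\mu}-c_{2}$. Combining the two regimes produces the claimed inequality. I do not anticipate a substantive obstacle; the only point requiring care is the uniform positivity of $F(x,\pm 1)$, which is a routine compactness argument based on joint continuity of $f$ and the strict positivity in \hyperlink{3}{(H3)}.
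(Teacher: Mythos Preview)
Your argument is correct and follows essentially the same route as the paper: both exploit \hyperlink{3}{(H3)} to show that $F(x,t)/|t|^{\mu}$ is monotone away from zero (the paper integrates $f/F\geq \mu/t$, you differentiate the ratio directly---these are the same computation), then use continuity of $F$ and compactness of $\overline{\Omega}$ to make the constant uniform in $x$, and finally treat the bounded range of $t$ separately. Your handling of $|t|\leq 1$ is in fact slightly cleaner than the paper's, since you use the positivity $F\geq 0$ built into \hyperlink{3}{(H3)} rather than a generic boundedness argument, which lets you take $c_{2}=c_{1}$.
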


\begin{proof}
By assumption \hyperlink{3}{(H3)}, for some $ r>0 $, we get
\begin{equation}\label{R1}
\frac{tf(x,t)}{F(x,t)}\geq \mu,
\end{equation} 
for a.e. $ x\in \Omega $ and all $ |t|\geq r $.

Firstly, suppose $ t >r $. By integrating (\ref{R1}) in $ [r,t] $, we deduce that
\begin{equation*}
F(x,t)\geq \frac{F(x,r)}{r^{\mu}} t^{\mu},
\end{equation*}
for a.e. $ x\in \Omega $ and all $ t>r $.
Since the function $ x\mapsto F(x,t) $ is continuous on $ \overline{\Omega} $, we can infer that
\begin{equation*}
F(x,t)\geq \min_{x\in \overline{\Omega}} F(x,r)r^{-\mu} t^{\mu},
\end{equation*}
for a.e. $  x\in \overline{\Omega} $ and all $ t>r $.
On the other hand, if $ t<-r $, by integrating in $ [t,-r] $, we get 
\begin{equation*}
F(x,t)\geq \frac{F(x,-r)}{r^{\mu}}|t|^{\mu}\geq \min_{x\in \overline{\Omega}} F(x,-r)r^{-\mu}|t|^{\mu}, 
\end{equation*}
for a.e. $ x\in \overline{\Omega} $ and all $ t<-r $.

Therefore, there exist $ c_{1}>0 $ such that
\begin{equation*}
F(x,t)\geq c_{1}|t|^{\mu},
\end{equation*}
for a.e. $  x\in \overline{\Omega} $ and all $ t $ such that $ |t|\geq r $.

Since $ F $ is continuous in $ \overline{\Omega}\times \mathbb{R} $, there exist $ \tilde{c}_{2}>0 $ such that
\begin{equation*}
|F(x,t)|\leq \tilde{c}_{2},
\end{equation*}
for all $ (x,t) \in \overline{\Omega}\times \{t\in \mathbb{R}; |t|\leq r\} $.

Take $ c_{2}=\tilde{c}_{2}+c_{1}r^{\mu} >0$. Then,
\begin{equation*}
F(x,t)\geq c_{1}|t|^{\mu}-c_{2},
\end{equation*}
for a.e. $ x\in \Omega $ and all $ t \in \mathbb{R} $.
\end{proof}

\begin{lemma}\label{lem3}
The set $ \mathcal{M}_{\lambda} $ is non-empty. More precisely, given $ u\in \mathbb{H}_{0}^{s}(\Omega) $, with $ u^{+}\neq 0$ and $ u^{-}\neq 0 $, there are $ t_{u}>0$ and $\theta_{u}>0 $ such that 
\begin{equation*}
\langle I'(t_{u}u^{+}+\theta_{u}u^{-}),u^{+}\rangle=0=\langle I'(t_{u}u^{+}+\theta_{u}u^{-}),u^{-}\rangle.
\end{equation*}
\end{lemma}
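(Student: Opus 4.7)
The plan is to locate the pair $(t_{u},\theta_{u})$ as an interior maximizer of the two-variable fibering map
\[
\phi(t,\theta):=I_{\lambda}(tu^{+}+\theta u^{-}),\qquad (t,\theta)\in[0,\infty)^{2}.
\]
Since $\phi\in C^{1}$ on the open quadrant with $\partial_{t}\phi=\langle I_{\lambda}'(tu^{+}+\theta u^{-}),u^{+}\rangle$ and $\partial_{\theta}\phi=\langle I_{\lambda}'(tu^{+}+\theta u^{-}),u^{-}\rangle$, any interior critical point of $\phi$ yields exactly the two identities in the statement. Using $u^{+}u^{-}\equiv 0$ pointwise, only the Gagliardo part mixes the two variables, and one gets the explicit formula
\[
\phi(t,\theta)=\tfrac{1}{2}\bigl(t^{2}\|u^{+}\|^{2}+2t\theta\,\alpha+\theta^{2}\|u^{-}\|^{2}\bigr)-\tfrac{1}{2_{s}^{*}}\bigl(t^{2_{s}^{*}}\|u^{+}\|_{L^{2_{s}^{*}}}^{2_{s}^{*}}+\theta^{2_{s}^{*}}\|u^{-}\|_{L^{2_{s}^{*}}}^{2_{s}^{*}}\bigr)-\lambda\!\int_{\Omega}\!F(x,tu^{+})\,dx-\lambda\!\int_{\Omega}\!F(x,\theta u^{-})\,dx,
\]
where $\alpha:=\langle u^{+},u^{-}\rangle_{\mathbb{H}_{0}^{s}(\Omega)}>0$ is the strictly positive cross term recalled in the introduction and the $\mathbb{H}$-norms are in $\mathbb{H}_{0}^{s}(\Omega)$.

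First I would establish coercivity. Estimating $2t\theta\,\alpha\leq\alpha(t^{2}+\theta^{2})$ and using the lower bound $F(x,s)\geq c_{1}|s|^{\mu}-c_{2}$ from Lemma \ref{Lem2} (which implies $-\lambda\int F$ is bounded above by a constant), I obtain
\[
\phi(t,\theta)\leq C_{1}(t^{2}+\theta^{2})-C_{2}\bigl(t^{2_{s}^{*}}+\theta^{2_{s}^{*}}\bigr)+C_{3}
\]
with positive constants $C_{i}$; since $2_{s}^{*}>2$, this gives $\phi(t,\theta)\to-\infty$ as $\max\{t,\theta\}\to\infty$. Continuity plus this decay yields a point $(t_{0},\theta_{0})\in[0,\infty)^{2}$ at which $\phi$ attains its supremum. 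Moreover, $\phi(0,0)=0$ while, for small $r>0$, Lemma \ref{Lem1} and the Sobolev embedding give
\[
\phi(r,r)=\tfrac{r^{2}}{2}\|u\|_{\mathbb{H}_{0}^{s}(\Omega)}^{2}+o(r^{2})>0,
\]
so the maximum is strictly positive and the corner $(0,0)$ is excluded.

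The decisive step is to rule out the open boundary rays. A direct computation, using that $u^{+}\equiv 0$ on $\{u<0\}$, $u^{-}\equiv 0$ on $\{u>0\}$ and $f(x,0)=0$, gives
\[
\partial_{t}\phi(0,\theta)=\theta\,\alpha>0\ (\theta>0), \qquad \partial_{\theta}\phi(t,0)=t\,\alpha>0\ (t>0),
\]
so $\phi$ is strictly increasing into the interior along each boundary ray. Hence $(t_{0},\theta_{0})\in(0,\infty)^{2}$ and the critical-point equations $\partial_{t}\phi=\partial_{\theta}\phi=0$ at $(t_{0},\theta_{0})$ give the desired $(t_{u},\theta_{u}):=(t_{0},\theta_{0})$. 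The main conceptual point is precisely this exclusion of the boundary: it is exactly the strict positivity of the cross term $\alpha=\langle u^{+},u^{-}\rangle_{\mathbb{H}_{0}^{s}(\Omega)}$ (which makes $u^{\pm}$ fail to be orthogonal, a nuisance elsewhere in the paper) that here works in our favor, forcing the maximizer into the interior and producing the required $t_{u},\theta_{u}>0$.
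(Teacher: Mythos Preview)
Your argument is correct, but it follows a genuinely different route from the paper's. The paper applies Miranda's theorem to the vector field
\[
\Phi_{u}(t,\theta)=\bigl(\langle I_{\lambda}'(tu^{+}+\theta u^{-}),tu^{+}\rangle,\ \langle I_{\lambda}'(tu^{+}+\theta u^{-}),\theta u^{-}\rangle\bigr)
\]
on a rectangle $[\delta_{1},\delta_{2}]^{2}$: using Lemma~\ref{Lem1} it shows each component is positive on the ``inner'' faces (small $t$ or $\theta$) and, using Lemma~\ref{Lem2} and (H3), negative on the ``outer'' faces (large $t$ or $\theta$), so Miranda's theorem produces a zero $(t_{u},\theta_{u})$ inside the box. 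You instead maximize the scalar fibering function $\phi$ directly and read off the critical-point equations at the interior maximizer. Your approach is more elementary (no topological lemma is invoked) and dovetails nicely with the later Lemma~\ref{Lem6}, where the paper itself turns to the same function $\varphi_{u}=\phi$ and shows that for $u\in\mathcal{M}_{\lambda}$ the point $(1,1)$ is its unique global maximizer. The paper's Miranda argument, on the other hand, is a bit more portable: it does not hinge on the strict positivity of the cross term $\alpha=\langle u^{+},u^{-}\rangle_{\mathbb{H}_{0}^{s}(\Omega)}$, so it transfers verbatim to the local setting where $\alpha=0$, whereas your boundary-exclusion step $\partial_{t}\phi(0,\theta)=\theta\alpha>0$ uses precisely this nonlocal feature (in the local case one would instead decouple $\phi$ and maximize each one-variable piece separately).
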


\begin{proof}
For any $ u\in \mathbb{H}_{0}^{s}(\Omega) $, with $ u^{+}\neq 0$ and $ u^{-}\neq 0 $, we consider $ \Phi_{u}:[0,\infty)\times[0,\infty)\rightarrow \mathbb{R}^2$, given by
\begin{equation}\label{Equa5}
\Phi_{u}(t,\theta):=(\langle I'(t u^{+}+\theta u^{-}),t u^{+}\rangle,\langle I'(t u^{+}+\theta u^{-}),\theta u^{-}\rangle). 
\end{equation}

Since $ I_{\lambda}\in C^{1}(\mathbb{H}_{0}^{s}(\Omega),\mathbb{R}) $, follows that $ \Phi_{u} \in C([0,\infty)\times [0,\infty),\mathbb{R}^{2})$. We will show that there are $ \delta_{1}>0$ and $ \delta_{2}>0 $ such that the equation $ \Phi_{u}(t,\theta)=(0,0) $ has solution in the square $ R=[\delta_{1},\delta_{2}]\times [\delta_{1},\delta_{2}] $. For this conclusion, we will use the Miranda's Theorem in $ \mathbb{R}^{2} $ (see \cite{G}). To simplify the notation, we will write $ \Phi_{u}= (\Phi_{u}^{1},\Phi_{u}^{2}) $. 

We need to check the hypotheses of Miranda's Theorem, namely
\begin{eqnarray*}
\Phi_{u}^{1}(\delta_{1},\theta)>0 \mbox{ and }
\Phi_{u}^{1}(\delta_{2},\theta)<0,\,\forall \theta \in [\delta_{1},\delta_{2}],\\
\Phi_{u}^{2}(t,\delta_{1}) >0
\mbox{ and }
\Phi_{u}^{2}(t,\delta_{2})<0,\,\forall t\in [\delta_{1},\delta_{2}].
\end{eqnarray*}

Now, from Lemma \ref{Lem1}, using that the embedding $ \mathbb{H}_{0}^{s}(\Omega)\hookrightarrow L^{r}(\Omega) $ is continuous, for all $ r\in [1,2_{s}^{*}] $, we obtain
\begin{eqnarray*}
\langle I'_{\lambda}(tu^{+}+\theta u^{-}),t u^{+}\rangle &=& \langle tu^{+}+\theta u^{-},t u^{+}\rangle_{\mathbb{H}_{0}^{s}(\Omega)}-t^{2_{s}^{*}}\int _{\Omega}|u^{+}|^{2_{s}^{*}} dx-\lambda \int _{\Omega} f(x,tu^{+})t u^{+}dx\\
&=& t^{2}\|u^{+}\|_{\mathbb{H}_{0}^{s}(\Omega)}^{2}-2t\theta \int _{\mathbb{R}^{2N}}\frac{u^{+}(x)u^{-}(y)}{|x-y|^{N+2s}}dx dy-t^{2_{s}^{*}}\int_{\Omega} |u^{+}|^{2_{s}^{*}}dx\\
&&-\lambda \int _{\Omega}f(x,tu^{+})tu^{+} dx\\
&\geq & t^{2}\|u^{+}\|_{\mathbb{H}_{0}^{s}(\Omega)}^{2}- t^{2_{s}^{*}}\int _{\Omega}|u^{+}|^{2_{s}^{*}}dx -\lambda \epsilon t^{2}\int _{\Omega}|u^{+}|^{2}dx-\lambda \delta (\epsilon)t^{q}\int _{\Omega}(u^{+})^{q}dx\\
&\geq & t^{2}\|u^{+}\|_{\mathbb{H}_{0}^{s}(\Omega)}- \tilde{c}_{1}t^{2_{s}^{*}}\|u^{+}\|_{\mathbb{H}_{0}^{s}(\Omega)}^{2_{s}^{*}}-\lambda \epsilon \tilde{c}_{1}t^{2}\|u^{+}\|_{\mathbb{H}_{0}^{s}(\Omega)}^{2}-\lambda \delta (\epsilon)\tilde{c}_{1}t^{q}\|u^{+}\|_{\mathbb{H}_{0}^{s}(\Omega)}^{q}\\
&\geq& (1-\lambda \tilde{c}_{1}\epsilon)t^{2}\|u^{+}\|_{\mathbb{H}_{0}^{s}(\Omega)}^{2}-\tilde{c}_{1}t^{2_{s}^{*}}\|u^{+}\|_{\mathbb{H}_{0}^{s}(\Omega)}^{2_{s}^{*}}-\lambda \delta(\epsilon)\tilde{c}_{1}t^{q}\|u^{+}\|_{\mathbb{H}_{0}^{s}(\Omega)}^{q},
\end{eqnarray*}
for some $ \tilde{c}_{1}>0 $. We choose $ \epsilon>0 $ such that $ (1-\lambda \tilde{c}_{1}\epsilon)>0 $. Since $ 2_{s}^{*}>2$ and $q>2 $, it follows that for any $ \theta\geq 0 $, $ \langle I'(tu^{+}+\theta u^{-}),tu^{+}\rangle>0 $, if $ t\ll 1 $.

Analogously, we can prove that, for any $ t \geq 0 $,  $ \langle I'_{\lambda}(tu^{+}+\theta u^{-}),\theta u^{-}\rangle> 0 $, if $ \theta \ll 1 $.

Therefore, there exist $ \delta_{1}>0 $ such that 
\begin{equation}\label{Equa6}
\Phi_{u}^{1}(\delta_{1},\theta)>0 \mbox{ and } \Phi_{u}^{2}(t,\delta_{1})>0, 
\end{equation}
for all $ t,\theta \geq 0 $.

From Lemma \ref{Lem2} and assumption \hyperlink{3}{(H3)}, it follows that 
\begin{eqnarray*}
\langle I'_{\lambda}(tu^{+}+\theta u^{-}),tu^{+}\rangle&=&t^{2}\|u^{+}\|_{\mathbb{H}_{0}^{s}(\Omega)}-2t\theta \int _{\mathbb{R}^{2N}}\frac{u^{+}(x)u^{-}(y)}{|x-y|^{N+2s}}dx dy - t^{2_{s}^{*}}\int _{\Omega}|u^{+}|^{2_{s}^{*}}dx\\
&&-\lambda \int _{\Omega}f(x,tu^{+})tu^{+}dx\\
&\leq& t^{2}\|u^{+}\|_{\mathbb{H}_{0}^{s}(\Omega)}-2t\theta \int _{\mathbb{R}^{2N}}\frac{u^{+}(x)u^{-}(y)}{|x-y|^{N+2s}}dx dy- t^{2_{s}^{*}}\int _{\Omega}|u^{+}|^{2_{s}^{*}}dx\\
&&- t^{\mu}\lambda c_{1}\int _{\Omega}|u^{+}|^{\mu}dx+\lambda c_{2}\lvert\Omega\rvert.
\end{eqnarray*}

Choosing $ t=\delta_{2}'>\delta_{1} $, it follows that if $ \theta \in [\delta_{1},\delta_{2}'] $ and $ \delta_{2}'\gg 1 $, then 
\begin{eqnarray*}
\Phi_{u}^{1}(\delta_{2}',\theta)&\leq& (\delta_{2}')^{2}\|u^{-}\|_{\mathbb{H}_{0}^{s}(\Omega)}^{2}-2(\delta_{2}')^{2} \int _{\mathbb{R}^{2N}}\frac{u^{+}(x)u^{-}(y)}{|x-y|^{N+2s}}dx dy- (\delta_{2}')^{2_{s}^{*}}\int _{\Omega}|u^{+}|^{2_{s}^{*}}dx\\
&&- (\delta_{2}')^{\mu}\lambda c_{1}\int _{\Omega}|u^{+}|^{\mu}dx+\lambda c_{2}\lvert\Omega\rvert<0,
\end{eqnarray*} 
once $ \mu >2 $ and $ 2_{s}^{*}>2 $.

Analogously, it is possible to show that 
\begin{equation*}
\Phi_{u}^{2}(t,\theta)\leq t^{2}\|u^{-}\|_{\mathbb{H}_{0}^{s}(\Omega)}^{2}-2t\theta \int _{\mathbb{R}^{N}}\frac{u^{+}(x)u^{-}(y)}{|x-y|^{N+2s}}dx dy-t^{2_{s}^{*}}\int _{\Omega}|u^{-}|^{2_{s}^{*}}dx -\lambda c_{1} t^{\mu}\int _{\Omega}|u^{-}|^{\mu}dx +\lambda c_{2}\lvert\Omega\rvert.
\end{equation*}

Hence, choosing $ \delta_{2}>\delta_{2}'\gg 1 $, we get
\begin{equation}\label{Equa7}
\Phi_{u}^{2}(\delta_{2},\theta)<0\, \operatorname{and}\, \Phi_{u}^{2}(t,\delta_{2})<0,
\end{equation}
for all $ t,\theta\in [\delta_{1},\delta_{2}] $.

From \eqref{Equa6} and \eqref{Equa7}, the hypotheses of Miranda's Theorem are satisfied. Thus, applying Miranda's Theorem, there exist $ (t_{u},\theta_{u})\in R $ such that $ \Phi_{u}(t_{u},\theta_{u})=(0,0) $. It concludes the proof of this lemma. 

\end{proof}

\begin{lemma}\label{Lem4}
There exist $ \rho>0 $ such that 
\begin{equation*}
\|u^{+}\|_{\mathbb{H}_{0}^{s}(\Omega)}, \,\|u^{-}\|_{\mathbb{H}_{0}^{s}(\Omega)}\geq \rho,
\end{equation*}
for all $ u\in \mathcal{M}_{\lambda} $.
\end{lemma}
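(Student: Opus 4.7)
The plan is to exploit the Nehari-type identity $\langle I'_\lambda(u), u^+\rangle = 0$ satisfied by every $u \in \mathcal{M}_\lambda$ and combine it with the positivity of the cross term and the growth bound of Lemma \ref{Lem1}. First I would expand this identity. Since $u^+$ and $u^-$ have disjoint supports, we have $|u|^{2_s^*-2}u \cdot u^+ = |u^+|^{2_s^*}$ pointwise and $f(x,u)u^+ = f(x,u^+)u^+$ (with $f(x,0)=0$ by \hyperlink{2}{(H2)}). Writing $u = u^+ + u^-$ in the $\mathbb{H}_0^s(\Omega)$ inner product,
\begin{equation*}
\langle I'_\lambda(u), u^+\rangle = \|u^+\|^2_{\mathbb{H}_0^s(\Omega)} + \langle u^-, u^+\rangle_{\mathbb{H}_0^s(\Omega)} - \int_\Omega |u^+|^{2_s^*}\, dx - \lambda \int_\Omega f(x,u^+) u^+\, dx = 0.
\end{equation*}

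Next I would use the crucial observation, highlighted already in the introduction, that
\begin{equation*}
\langle u^+, u^-\rangle_{\mathbb{H}_0^s(\Omega)} = -2\int_{\mathbb{R}^{2N}} \frac{u^+(x)u^-(y)}{|x-y|^{N+2s}}\, dx\, dy \geq 0,
\end{equation*}
since $u^+(x)u^-(y) \leq 0$. Discarding this nonnegative term gives the one-sided estimate
\begin{equation*}
\|u^+\|^2_{\mathbb{H}_0^s(\Omega)} \leq \int_\Omega |u^+|^{2_s^*}\, dx + \lambda \int_\Omega f(x,u^+) u^+\, dx.
\end{equation*}
Then I would apply Lemma \ref{Lem1} with a parameter $\epsilon > 0$ to bound $|f(x,u^+)u^+| \leq \epsilon |u^+|^2 + \delta(\epsilon)|u^+|^q$, and insert the continuous Sobolev embeddings $\mathbb{H}_0^s(\Omega) \hookrightarrow L^2(\Omega), L^q(\Omega), L^{2_s^*}(\Omega)$, obtaining constants $\tilde{c}_i > 0$ so that
\begin{equation*}
\|u^+\|^2_{\mathbb{H}_0^s(\Omega)} \leq \tilde{c}_1 \|u^+\|^{2_s^*}_{\mathbb{H}_0^s(\Omega)} + \lambda \epsilon \tilde{c}_2 \|u^+\|^2_{\mathbb{H}_0^s(\Omega)} + \lambda \delta(\epsilon) \tilde{c}_3 \|u^+\|^q_{\mathbb{H}_0^s(\Omega)}.
\end{equation*}

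Finally, I would fix $\epsilon$ small enough that $\lambda \epsilon \tilde{c}_2 < 1/2$, absorb that term into the left-hand side, divide by $\|u^+\|^2_{\mathbb{H}_0^s(\Omega)} > 0$ (which is legal since $u^+ \neq 0$ on $\mathcal{M}_\lambda$), and read off
\begin{equation*}
\frac{1}{2} \leq \tilde{c}_1 \|u^+\|^{2_s^*-2}_{\mathbb{H}_0^s(\Omega)} + \lambda \delta(\epsilon) \tilde{c}_3 \|u^+\|^{q-2}_{\mathbb{H}_0^s(\Omega)}.
\end{equation*}
Since both exponents $2_s^*-2$ and $q-2$ are strictly positive, this forces a uniform lower bound $\|u^+\|_{\mathbb{H}_0^s(\Omega)} \geq \rho_+ > 0$. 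Running the identical argument with $u^-$ in place of $u^+$ yields $\|u^-\|_{\mathbb{H}_0^s(\Omega)} \geq \rho_- > 0$, and we take $\rho = \min\{\rho_+, \rho_-\}$. The argument is essentially routine; the only mildly delicate point is keeping track of the sign of the cross term $\langle u^+, u^-\rangle_{\mathbb{H}_0^s(\Omega)}$ so that it can be dropped in the direction that produces a usable inequality, rather than in the direction that would obstruct the bound.
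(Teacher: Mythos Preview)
Your proof is correct and follows essentially the same approach as the paper: both expand the identity $\langle I_\lambda'(u),u^{\pm}\rangle=0$, use the nonnegativity of the cross term $\langle u^+,u^-\rangle_{\mathbb{H}_0^s(\Omega)}$ to obtain $\|u^{\pm}\|^2_{\mathbb{H}_0^s(\Omega)}\le \int_\Omega |u^{\pm}|^{2_s^*}dx+\lambda\int_\Omega f(x,u^{\pm})u^{\pm}dx$, and then apply Lemma~\ref{Lem1} together with the Sobolev embeddings and a suitable choice of $\epsilon$ to force the uniform lower bound. The only differences are cosmetic (you write the cross term explicitly and then drop it, while the paper records the equivalent inequality $\|u^{\pm}\|^2_{\mathbb{H}_0^s(\Omega)}\le \langle u,u^{\pm}\rangle_{\mathbb{H}_0^s(\Omega)}$ directly).
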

\begin{proof}
If $ u\in \mathcal{M}_{\lambda} $, then
\begin{equation*}
\int _{\mathbb{R}^{2N}}\frac{(u(x)-u(y))(u^{\pm}(x)-u^{\pm}(y))}{|x-y|^{N+2s}}dx dy=\int _{\Omega}|u^{\pm}|^{2_{s}^{*}}dx+\lambda \int _{\Omega}f(x,u^{\pm})u^{\pm}dx.
\end{equation*}

Since 
\begin{equation*}
\|u^{\pm}\|_{\mathbb{H}_{0}^{s}(\Omega)}^{2}\leq \int _{\mathbb{R}^{2N}}\frac{(u(x)-u(y))(u^{\pm}(x)-u^{\pm}(y))}{|x-y|^{N+2s}}dx dy,
\end{equation*}
using the Lemma \ref{Lem1} and the continuity of embedding $ \mathbb{H}_{0}^{s}(\Omega)\hookrightarrow L^{r}(\Omega) $, for all $ r\in [1,2_{s}^{*}] $, we deduce that 
\begin{eqnarray*}
\|u^{\pm}\|_{\mathbb{H}_{0}^{s}(\Omega)}^{2}&\leq&\int _{\Omega}|u^{\pm}|^{2_{s}^{*}}dx+\lambda \int _{\Omega}f(x,u^{\pm})u^{\pm}dx\\
&\leq&\int _{\Omega}|u^{\pm}|^{2_{s}^{*}}dx+\lambda \epsilon \int _{\Omega}|u^{\pm}|^{2}dx+\delta(\epsilon) \lambda \int _{\Omega}|u^{\pm}|^{q}dx\\
&\leq&\tilde{c}_{1}\|u^{\pm}\|_{\mathbb{H}_{0}^{s}(\Omega)}^{2_{s}^{*}}+\lambda \epsilon \tilde{c}_{1} \|u^{\pm}\|_{\mathbb{H}_{0}^{s}(\Omega)}^{2}+\delta(\epsilon) \lambda \tilde{c}_{1}\|u^{\pm}\|_{\mathbb{H}_{0}^{s}(\Omega)}^{q},
\end{eqnarray*}
for some $ \tilde{c}_{1}>0 $. Thus,
\begin{equation*}
(1-\epsilon \lambda \tilde{c}_{1})\|u^{\pm}\|_{\mathbb{H}_{0}^{s}(\Omega)}^{2}\leq \tilde{c}_{1}\|u^{\pm}\|_{\mathbb{H}_{0}^{s}(\Omega)}^{2_{s}^{*}}+\delta(\epsilon)\lambda \tilde{c}_{1}\|u^{\pm}\|_{\mathbb{H}_{0}^{s}(\Omega)}^{q}.
\end{equation*}

Taking $ \epsilon>0 $ such that $ 1-\epsilon \lambda \tilde{c}_{1}>0 $, we get the conclusion desired, since $ q>2 $ and $ 2_{s}^{*}>2 $.
\end{proof}

\begin{lemma}\label{Lem5}
There are $ c_{1} >0$ and $ c_{2}>0 $, with $ c_{1}=c_{1}(s,N,\mu) $ such that 
\begin{equation*}
c_{1}\|u\|_{\mathbb{H}_{0}^{s}(\Omega)}^{2}\leq I_{\lambda}(u)\leq c_{2}\|u\|_{\mathbb{H}_{0}^{s}(\Omega)}^{2},
\end{equation*}
for any $ u\in \mathcal{M}_{\lambda} $. 
\end{lemma}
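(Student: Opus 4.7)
The key observation is that any $u\in\mathcal{M}_\lambda$ in fact lies on the full Nehari manifold $\mathcal{N}_\lambda$: adding the two defining identities $\langle I'_\lambda(u),u^+\rangle=0$ and $\langle I'_\lambda(u),u^-\rangle=0$ and using $u=u^++u^-$ gives $\langle I'_\lambda(u),u\rangle=0$. Once this is recognized, both inequalities are purely algebraic consequences of the definition of $I_\lambda$ and hypothesis \hyperlink{3}{(H3)}; no embedding estimate or compactness argument is needed.

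For the upper bound I would simply discard the two non-positive contributions in $I_\lambda$. Since $|u|^{2_s^*}\ge 0$ and $F(x,u)\ge 0$ by \hyperlink{3}{(H3)}, one reads directly from the definition of the functional
$$I_\lambda(u) \leq \tfrac{1}{2}\|u\|_{\mathbb{H}_0^s(\Omega)}^2,$$
so that $c_2=1/2$ works, independently of $\lambda$, $s$, $N$, and $\mu$.

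For the lower bound I would invoke the standard Ambrosetti--Rabinowitz manipulation, subtracting $\tfrac{1}{\mu}\langle I'_\lambda(u),u\rangle=0$ from $I_\lambda(u)$:
\begin{align*}
I_\lambda(u) &= I_\lambda(u)-\tfrac{1}{\mu}\langle I'_\lambda(u),u\rangle \\
&= \left(\tfrac{1}{2}-\tfrac{1}{\mu}\right)\|u\|_{\mathbb{H}_0^s(\Omega)}^2 + \left(\tfrac{1}{\mu}-\tfrac{1}{2_s^*}\right)\int_\Omega|u|^{2_s^*}\,dx + \lambda\int_\Omega\!\left(\tfrac{1}{\mu}\,u f(x,u)-F(x,u)\right)dx.
\end{align*}
Because $\mu\in(2,2_s^*)$ by \hyperlink{3}{(H3)}, both coefficients $\tfrac{1}{2}-\tfrac{1}{\mu}$ and $\tfrac{1}{\mu}-\tfrac{1}{2_s^*}$ are strictly positive, while the last integrand $\tfrac{1}{\mu}u f(x,u)-F(x,u)$ is non-negative, again by \hyperlink{3}{(H3)}. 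Dropping these two non-negative extra terms yields the lower bound with $c_1=(\mu-2)/(2\mu)$, which depends only on $\mu$ (and implicitly on $s$, $N$ through the critical exponent $2_s^*=2N/(N-2s)$, matching the stated dependence).

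\textbf{Main obstacle.} There is essentially none: the entire statement is a one-line consequence of the Nehari relation together with the condition $2<\mu<2_s^*$ and the positivity of $F$ from \hyperlink{3}{(H3)}. The quantitative separation $\|u^\pm\|\ge\rho$ from Lemma \ref{Lem4} plays no role in this particular estimate; Lemma \ref{Lem5} is a formal book-keeping statement whose real purpose will be to guarantee coercivity of $I_\lambda$ on $\mathcal{M}_\lambda$ and a two-sided control of minimizing sequences in the subsequent analysis of $\gamma_\lambda$.
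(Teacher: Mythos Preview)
Your proof is correct and follows essentially the same approach as the paper: both use that $\mathcal{M}_\lambda\subset\mathcal{N}_\lambda$, hypothesis \hyperlink{3}{(H3)}, and the Nehari identity to obtain the lower bound, and both drop the non-positive terms for the upper bound, yielding the same constants $c_1=\tfrac{1}{2}-\tfrac{1}{\mu}$ and $c_2=\tfrac{1}{2}$. The only cosmetic difference is that the paper first replaces $F$ by $\tfrac{1}{\mu}uf(x,u)$ and then substitutes the Nehari relation, whereas you subtract $\tfrac{1}{\mu}\langle I'_\lambda(u),u\rangle$ in one stroke; the two manipulations are algebraically equivalent.
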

\begin{proof}
Given $ u\in \mathcal{M}_{\lambda} $, since $ \mathcal{M}_{\lambda}\subset \mathcal{N}_{\lambda} $,  $ \langle I_{\lambda}'(u),u\rangle =0 $. Then, using \hyperlink{3}{(H3)}, 
\begin{eqnarray*}
I_{\lambda}(u)&=&\frac{1}{2}\|u\|_{\mathbb{H}_{0}^{s}(\Omega)}^{2}-\frac{1}{2_{s}^{*}}\int _{\Omega}|u|^{2_{s}^{*}}dx-\lambda \int _{\Omega}F(x,u)dx\\
&\geq&\frac{1}{2}\|u\|_{\mathbb{H}_{0}^{s}(\Omega)}^{2}-\frac{1}{2_{s}^{*}}\int _{\Omega}|u|^{2_{s}^{*}}dx-\frac{\lambda}{\mu}\int _{\Omega}f(x,u)u dx\\
&\geq& \frac{1}{2}\int _{\Omega} |u|^{2_{s}^{*}}dx + \frac{\lambda}{2}\int _{\Omega}f(x,u)u dx -\frac{1}{2_{s}^{*}}\int _{\Omega}|u|^{2_{s}^{*}}dx\\
&&-\frac{\lambda}{\mu}\int _{\Omega}f(x,u)u dx\\
&=&\left(\frac{1}{2}-\frac{1}{2_{s}^{*}} \right)\int _{\Omega}|u|^{2_{s}^{*}}dx + \lambda \left( \frac{1}{2}-\frac{1}{\mu} \right)\int _{\Omega}f(x,u)u dx\\
&\geq&\min \left\{\frac{1}{2}-\frac{1}{2_{s}^{*}},\frac{1}{2}-\frac{1}{\mu}\right\} \|u\|_{\mathbb{H}_{0}^{s}(\Omega)}^{2}.
\end{eqnarray*}
Thus, for any $ u\in \mathcal{M}_{\lambda} $,
\begin{equation*}
I_{\lambda}(u)\geq c_{1}\|u\|_{\mathbb{H}_{0}^{s}(\Omega)}^{2},
\end{equation*}
where $ c_{1}=\min \left\{\frac{1}{2}-\frac{1}{2_{s}^{*}},\frac{1}{2}-\frac{1}{\mu}\right\} $.

On the other hand, once by \hyperlink{3}{(H3)}, $ F(x,t)\geq 0$, for a.e. $ x\in \Omega $ and all $ t\in \mathbb{R} $, we get
\begin{eqnarray*}
I_{\lambda}(u)&=&\frac{1}{2}\|u\|_{\mathbb{H}_{0}^{s}(\Omega)}^{2}-\frac{1}{2_{s}^{*}}\int _{\Omega}|u|^{2_{s}^{*}}dx-\lambda \int _{\Omega}F(x,u)dx\\
&\leq&\frac{1}{2}\|u\|_{\mathbb{H}_{0}^{s}(\Omega)}^{2}=c_{2}\|u\|_{\mathbb{H}_{0}^{s}(\Omega)}^{2},
\end{eqnarray*}
for any $ u\in \mathcal{M}_{\lambda} $.
\end{proof}

\begin{proposition}\label{Prop1}
If $ \gamma_{\lambda}=\inf_{u\in \mathcal{M}_{\lambda}}I_{\lambda}(u) $, then
\begin{equation*}
\lim_{\lambda\rightarrow +\infty} \gamma _{\lambda}=0.
\end{equation*}
\end{proposition}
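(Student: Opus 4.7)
The plan is to construct, for each large $\lambda$, a specific element $u_\lambda\in\mathcal{M}_\lambda$ whose energy tends to zero. I would fix once and for all a function $u\in\mathbb{H}_0^s(\Omega)$ with $u^+\neq 0$ and $u^-\neq 0$; by Lemma \ref{lem3} there exist $t_\lambda,\theta_\lambda>0$ such that $u_\lambda:=t_\lambda u^++\theta_\lambda u^-\in\mathcal{M}_\lambda$. Using $\gamma_\lambda\le I_\lambda(u_\lambda)$ together with the upper bound $I_\lambda(u_\lambda)\le c_2\|u_\lambda\|_{\mathbb{H}_0^s(\Omega)}^2$ from Lemma \ref{Lem5} and the expansion $\|u_\lambda\|^2=t_\lambda^2\|u^+\|^2+2t_\lambda\theta_\lambda\langle u^+,u^-\rangle+\theta_\lambda^2\|u^-\|^2$, I would reduce the statement to proving $t_\lambda\to 0$ and $\theta_\lambda\to 0$ as $\lambda\to+\infty$; the lower bound $\gamma_\lambda\ge 0$ is immediate from Lemma \ref{Lem5}.

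The first key step is to show that $(t_\lambda,\theta_\lambda)$ remains bounded uniformly in $\lambda$. Writing $\langle I_\lambda'(u_\lambda),u^+\rangle=0$ and $\langle I_\lambda'(u_\lambda),u^-\rangle=0$ explicitly yields
\begin{equation*}
t_\lambda\|u^+\|^2+\theta_\lambda\langle u^+,u^-\rangle=t_\lambda^{2_s^*-1}\!\!\int_\Omega(u^+)^{2_s^*}dx+\lambda\!\!\int_\Omega f(x,t_\lambda u^+)u^+\,dx,
\end{equation*}
and the analogous identity after interchanging $+$ and $-$. Assumption \hyperlink{3}{(H3)} forces $f(x,t)$ to have the sign of $t$, so both $\lambda$-integrals on the right are non-negative. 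Setting $M_\lambda:=\max\{t_\lambda,\theta_\lambda\}$ and $C_0:=\max\{\|u^+\|^2,\|u^-\|^2\}+\langle u^+,u^-\rangle$, the left-hand sides are at most $C_0 M_\lambda$, while the right-hand side of whichever identity corresponds to $M_\lambda$ is at least $M_\lambda^{2_s^*-1}\min\{\int(u^+)^{2_s^*},\int|u^-|^{2_s^*}\}$. Since $2_s^*-1>1$, this pins $M_\lambda\le C$ for a constant $C$ independent of $\lambda$.

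With $M_\lambda$ bounded, the same two identities show that both quantities $\lambda\int f(x,t_\lambda u^+)u^+\,dx$ and $\lambda\int f(x,\theta_\lambda u^-)u^-\,dx$ are uniformly bounded, so the integrals themselves tend to $0$ as $\lambda\to+\infty$. If $t_\lambda\to t_*>0$ along some subsequence, then Lemma \ref{Lem1} provides an $L^1$-majorant (a linear combination of $(u^+)^2$ and $(u^+)^q$, both integrable), so dominated convergence gives $\int f(x,t_\lambda u^+)u^+\,dx\to\int f(x,t_*u^+)u^+\,dx$. The limit integrand is strictly positive a.e.\ on the positive-measure set $\{u^+>0\}$ because $f(x,t_*u^+)>0$ there by \hyperlink{3}{(H3)}, which contradicts the vanishing of the integral; hence $t_\lambda\to 0$, and the symmetric argument yields $\theta_\lambda\to 0$.

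I expect the uniform boundedness of $(t_\lambda,\theta_\lambda)$ to be the main obstacle: since the coefficient $\lambda$ multiplying the superlinear term grows without bound, one cannot exclude a priori that the fitting parameters blow up, and the argument only succeeds because the critical growth $2_s^*-1>1$ on the right-hand side dominates the linear left-hand side, yielding a $\lambda$-independent bound on $M_\lambda$. Once this boundedness is secured, the remaining steps are essentially pointwise limits plus Lemma \ref{Lem5}.
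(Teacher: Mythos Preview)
Your proposal is correct and follows essentially the same route as the paper: fix a single sign-changing test function, project it onto $\mathcal{M}_\lambda$ via Lemma~\ref{lem3}, use Lemma~\ref{Lem5} to reduce to $t_\lambda,\theta_\lambda\to 0$, obtain a $\lambda$-independent bound on $(t_\lambda,\theta_\lambda)$ from the critical-growth term, and finish by a subsequence/positivity contradiction. The only cosmetic difference is that the paper uses the combined Nehari identity $\langle I_\lambda'(u_\lambda),u_\lambda\rangle=0$ for the boundedness step, whereas you work with the two separate identities $\langle I_\lambda'(u_\lambda),u^\pm\rangle=0$; both give the same estimate.
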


\begin{proof}
By Lemma \ref{Lem5}, it follows that $ I_{\lambda}(u)>0 $, for all $ u\in \mathcal{M}_{\lambda} $. Thus, $ I_{\lambda} $ is bounded below on $ \mathcal{M}_{\lambda} $ and, therefore, $ \gamma _{\lambda}=\inf _{u\in \mathcal{M}_{\lambda}}I_{\lambda}(u) $ is well-defined.

We fix $ u\in \mathbb{H}_{0}^{s}(\Omega) $, with $ u^{+}\neq 0 $ and $ u^{-}\neq 0 $. By Lemma \ref{lem3}, for each $ \lambda >0 $, there exist $ t_{\lambda} >0 $ and $ \theta_{\lambda}> 0$ such that 
\begin{equation*}
t_{\lambda}u^{+}+\theta _{\lambda}u^{-} \in \mathcal{M}_{\lambda}.
\end{equation*}
Also, by Lemma \ref{Lem5}, we have
\begin{eqnarray*}
0\leq \gamma_{\lambda}&=&\inf_{v\in \mathcal{M}_{\lambda}}I_{\lambda}(v)\leq I_{\lambda}(t_{\lambda}u^{+}+\theta _{\lambda}u^{-})\\
&\leq&\frac{1}{2}\|t_{\lambda}u^{+}+\theta _{\lambda}u^{-}\|_{\mathbb{H}_{0}^{s}(\Omega)}^{2}\\
&\leq&t_{\lambda}^{2}\|u^{+}\|_{\mathbb{H}_{0}^{s}(\Omega)}^{2}+ \theta _{\lambda}^{2}\|u^{-}\|_{\mathbb{H}_{0}^{s}(\Omega)}^{2}.
\end{eqnarray*}

Thus, it is enough to prove that $ t_{\lambda}\rightarrow 0 $ and $ \theta _{\lambda}\rightarrow 0 $, as $ \lambda \rightarrow +\infty $.
We consider the set
\begin{equation*}
Q_{u}=\{(t_{\lambda},\theta _{\lambda})\in [0,\infty)\times [0,\infty); \Phi_{u}(t_{\lambda},\theta _{\lambda})=(0,0), \lambda >0\},
\end{equation*}
where $ \Phi_{u} $ was defined in \eqref{Equa5}.
Since $ f(x,t)t\geq 0 $, for a.e. $ x\in \Omega $ and all $ t\in \mathbb{R} $ (by \hyperlink{3}{(H3)}) and $ t_{\lambda}u^{+}+\theta _{\lambda}u^{-}\in \mathcal{N}_{\lambda} $, we have
\begin{eqnarray*}
t_{\lambda}^{2_{s}^{*}}\int _{\Omega}|u^{+}|^{2_{s}^{*}}dx +\theta _{\lambda}^{2_{s}^{*}}\int _{\Omega} |u^{-}|^{2_{s}^{*}}dx&\leq& t_{\lambda}^{2_{s}^{*}}\int _{\Omega}|u^{+}|^{2_{s}^{*}}dx +\theta _{\lambda}^{2_{s}^{*}}\int _{\Omega} |u^{-}|^{2_{s}^{*}}dx\\
&&+\lambda \int _{\Omega}f(x,t_{\lambda} u^{+})t_{\lambda}u^{+}dx+\lambda \int _{\Omega}f(x,\theta _{\lambda}u^{-})\theta _{\lambda}u^{-}dx\\
&=&\|t_{\lambda}u^{+}+\theta _{\lambda}u^{-}\|_{\mathbb{H}_{0}^{s}(\Omega)}^{2}\\
&\leq&2t_{\lambda}^{2}\|u^{+}\|_{\mathbb{H}_{0}^{s}(\Omega)}^{2}+2\theta _{\lambda}^{2}\|u^{-}\|_{\mathbb{H}_{0}^{s}(\Omega)}^{2}.
\end{eqnarray*}
Then, since $ 2_{s}^{*}>2 $, it follows that $ Q_{u} $ is bounded.

Let $ \{\lambda _{j}\} \subset \mathbb{R}_{+}$ be such that $ \lambda _{j}\rightarrow +\infty $, as $ j\rightarrow +\infty $. Then, there exist $ t_{0} $ and $ \theta _{0} $ such that $ \{(t_{\lambda_{j}},\theta_{\lambda_{j}})\},$ up to a subsequence, still denoted by $ \{(t_{\lambda_{j}},\theta_{\lambda_{j}})\} ,$ conveges to $(t_{0},\theta _{0}),$ as $j\to +\infty.$

We will show that $ t_{0}=\theta _{0}=0 $. Suppose, by contradiction, that $ t_{0} >0$ or $ \theta _{0}> 0$. Since $ \{t_{\lambda_{j}}u^{+}+\theta _{\lambda _{j}}u^{-}\}\in \mathcal{N}_{\lambda} $, for any $ j\in \mathbb{N} $, we have
\begin{eqnarray}\label{Equa8}
\|t_{\lambda _{j}}u^{+}+\theta _{\lambda _{j}}u^{-}\|_{\mathbb{H}_{0}^{s}(\Omega)}^{2}&=&\lambda _{j}\int _{\Omega}f(x,t_{\lambda _{j}}u^{+}+\theta _{\lambda _{j}}u^{-})(t_{\lambda _{j}}u^{+}+\theta _{\lambda _{j}}u^{-})dx\nonumber\\
&& +\int _{\Omega}|t_{\lambda _{j}}u^{+}+\theta _{\lambda _{j}}u^{-}|^{2_{s}^{*}} dx .
\end{eqnarray}

Provided that $ t_{\lambda _{j}}u^{+}\rightarrow t_{0}u^{+} $ and $ \theta _{\lambda _{j}}u^{-}\rightarrow \theta _{0}u^{-} $ in $ \mathbb{H}_{0}^{s}(\Omega) $, by Lemma \ref{Lem1} and assumption \hyperlink{3}{(H3)}, we have
\begin{eqnarray*}
\int _{\Omega}f(x,t_{\lambda _{j}}u^{+}+\theta _{\lambda _{j}}u^{-})(t_{\lambda _{j}}u^{+}+\theta _{\lambda _{j}}u^{-})dx\rightarrow \int _{\Omega}f(x,t_{0}u^{+}+\theta _{0}u^{-})(t_{0}u^{+}+\theta _{0}u^{-})dx>0,
\end{eqnarray*}
as $ j\rightarrow +\infty $. Once $ \lambda_{j}\rightarrow +\infty $, as $ j\rightarrow +\infty $ and $\{ t_{\lambda _{j}}u^{+}+\theta _{\lambda _{j}}u^{-} \}$ is bounded in $ \mathbb{H}_{0}^{s}(\Omega) $, we have a contradiction with the equality \eqref{Equa8}. Thus, $ t_{0}=\theta_{0}=0 $.

Therefore $ \gamma _{\lambda _{j}}\rightarrow 0 $, as $ j\rightarrow +\infty $, which concludes the proof.
\end{proof}

Now, fixed $ u\in \mathbb{H}_{0}^{s}(\Omega) $, with $ u^{\pm}\neq 0$, we consider $ \varphi_{u}:[0,\infty)\times [0,\infty)\rightarrow \mathbb{R} $, defined by
\begin{equation*}
\varphi_{u}(t,\theta):=I_{\lambda}(tu^{+}+\theta u^{-})
\end{equation*}
and $ \psi_{u} :[0,\infty)\times [0,\infty)\rightarrow \mathbb{R}^{2} $, defined by
\begin{equation*}
\psi_{u}(t,\theta):=\left(\partial_{t} \varphi_{u}(t,\theta),\partial_{\theta}\varphi_{u}(t,\theta)\right)=\left(\langle I_{\lambda}'(tu^{+}+\theta u^{-}),u^{+}\rangle,\langle I_{\lambda}'(tu^{+}+\theta u^{-}),u^{-}\rangle\right).
\end{equation*}

Since $ f(x,\cdot) $ is of class $ C^{1} $, for a.e. $ x\in \Omega $, using \hyperlink{1}{(H1)}, it follows that $ \psi_{u} $ is also of class $ C^{1} $. 

\begin{lemma}\label{Lem6}
Let $ u\in \mathcal{M}_{\lambda} $. Then, 
\begin{itemize}
\item [(i)] $ \varphi_{u}(t,\theta)<\varphi_{u}(1,1) $, for all $ (t,\theta)\neq (1,1) $;
\item [(ii)] $ \det J_{(1,1)}\psi_{u} >0$, where $ J_{(1,1)}\psi_{u} $ is the Jacobian matrix of $ \psi_{u} $ in $ (1,1) $.
\end{itemize}
\end{lemma}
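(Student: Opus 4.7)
The key structural observation is that $u^{+}$ and $u^{-}$ have disjoint supports, so the $L^{2_{s}^{*}}$ and $f$-terms in $I_{\lambda}(tu^{+}+\theta u^{-})$ split; only the Gagliardo seminorm couples $u^{+}$ and $u^{-}$, through the positive cross term $C:=\langle u^{+},u^{-}\rangle_{\mathbb{H}_{0}^{s}(\Omega)}$. The defining identities of $\mathcal{M}_{\lambda}$ read
\[
\|u^{\pm}\|_{\mathbb{H}_{0}^{s}(\Omega)}^{2}+C=\alpha_{\pm}+\lambda\int_{\Omega}f(x,u^{\pm})u^{\pm}\,dx,\qquad \alpha_{\pm}:=\|u^{\pm}\|_{L^{2_{s}^{*}}(\Omega)}^{2_{s}^{*}}.
\]

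For (i), I would expand $\varphi_{u}(t,\theta)-\varphi_{u}(1,1)$ directly and use these two identities to eliminate $\|u^{\pm}\|_{\mathbb{H}_{0}^{s}(\Omega)}^{2}$. The resulting expression collapses to
\[
\alpha_{+}h(t)+\alpha_{-}h(\theta)-\tfrac{1}{2}C(t-\theta)^{2}+\lambda\int_{\Omega}g_{x}^{+}(t)\,dx+\lambda\int_{\Omega}g_{x}^{-}(\theta)\,dx,
\]
with $h(s):=\tfrac{s^{2}-1}{2}-\tfrac{s^{2_{s}^{*}}-1}{2_{s}^{*}}$ and $g_{x}^{\pm}(s):=\tfrac{s^{2}-1}{2}f(x,u^{\pm})u^{\pm}+F(x,u^{\pm})-F(x,su^{\pm})$. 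Elementary one-variable calculus gives $h\le 0$ on $[0,\infty)$ with equality only at $s=1$, and the cross piece is manifestly non-positive. The remaining pointwise estimate $g_{x}^{\pm}\le 0$ follows from computing
\[
(g_{x}^{\pm})'(s)=s\,(u^{\pm}(x))^{2}\left(\frac{f(x,u^{\pm}(x))}{u^{\pm}(x)}-\frac{f(x,su^{\pm}(x))}{su^{\pm}(x)}\right)
\]
and invoking (H4): its increasing branch applies where $u^{+}>0$ and its decreasing branch where $u^{-}<0$, and in both cases $g_{x}^{\pm}$ attains its unique maximum $0$ at $s=1$. Summing everything and observing that $(t,\theta)\neq(1,1)$ forces $h(t)<0$ or $h(\theta)<0$, the total is strictly negative, proving (i).

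For (ii), I would compute the four partials of $\psi_{u}$ at $(1,1)$. The disjoint supports of $u^{\pm}$ force the off-diagonal entries to equal $C$: $\partial_{\theta}\psi_{u}^{1}(1,1)=\partial_{t}\psi_{u}^{2}(1,1)=C$. For the diagonal entries, direct differentiation of $\psi_{u}^{1}$ followed by substitution of $\|u^{+}\|_{\mathbb{H}_{0}^{s}(\Omega)}^{2}$ via the Nehari identity at $(1,1)$ gives
\[
\partial_{t}\psi_{u}^{1}(1,1)=-(2_{s}^{*}-2)\alpha_{+}-C-\lambda\int_{\Omega}u^{+}\,\partial_{t}H(x,u^{+})\,dx,
\]
and analogously for $\partial_{\theta}\psi_{u}^{2}(1,1)$. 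By Remark \ref{Obs1}, $t\,\partial_{t}H(x,t)>0$ for $t\neq 0$, so each diagonal entry has the form $-(a_{\pm}+C)$ with $a_{\pm}>0$, whence $\det J_{(1,1)}\psi_{u}=(a_{+}+C)(a_{-}+C)-C^{2}=a_{+}a_{-}+C(a_{+}+a_{-})>0$.

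The only delicate point is the pointwise fibering estimate $g_{x}^{\pm}\le 0$ in part (i): one must track the sign of $u^{\pm}(x)$ carefully so that the correct branch of (H4) is invoked on the respective subsets of $\Omega$. After this lemma is in hand, both parts reduce to algebraic manipulation using only the Nehari identities defining $\mathcal{M}_{\lambda}$ and the sign information provided by Remark \ref{Obs1}.
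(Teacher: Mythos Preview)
Your argument is correct, and for part (ii) it is essentially identical to the paper's computation, only with the bookkeeping tidied up: both you and the paper compute the Hessian entries, substitute the $\mathcal{M}_{\lambda}$-identities, and use Remark~\ref{Obs1} to see that the diagonal entries dominate the off-diagonal $C$.

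For part (i), however, your route is genuinely different and considerably more direct. The paper argues indirectly: it first shows via \eqref{Equa9} that $\varphi_{u}(t,\theta)\to-\infty$ as $|(t,\theta)|\to\infty$, so a global maximum $(t_{u},\theta_{u})$ exists; then it excludes the coordinate axes (Claim~1), then forces $t_{u},\theta_{u}\le 1$ by comparing with the identity $\langle I_{\lambda}'(u),u^{\pm}\rangle=0$ (Claim~2), and finally rules out any point of $[0,1]^{2}\setminus\{(1,1)\}$ by a contradiction using the monotonicity of $H(x,\cdot)$. Your approach bypasses all of this location analysis: you expand $\varphi_{u}(t,\theta)-\varphi_{u}(1,1)$ and, after substituting the Nehari identities, exhibit it as the sum of four non-positive pieces $\alpha_{+}h(t)$, $\alpha_{-}h(\theta)$, $-\tfrac{C}{2}(t-\theta)^{2}$, and the $g_{x}^{\pm}$ integrals, with the first two strictly negative unless $t=\theta=1$. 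What the paper's argument buys is that it never needs the explicit algebraic identity $-\tfrac{t^{2}-1}{2}-\tfrac{\theta^{2}-1}{2}+(t\theta-1)=-\tfrac{1}{2}(t-\theta)^{2}$ or the clean factorisation of $(g_{x}^{\pm})'$; what your argument buys is a single-pass proof with no case splits and strictness coming for free from $\alpha_{\pm}>0$ and $h(s)<0$ for $s\neq 1$. Your handling of the sign of $u^{\pm}$ under (H4) is correct: on $\{u^{+}>0\}$ the increasing branch applies and on $\{u^{-}<0\}$ the decreasing branch applies, and in both cases $(g_{x}^{\pm})'$ changes sign from positive to negative at $s=1$.
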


\begin{proof}
Firstly, we will proof the item (i). Since $ u\in \mathcal{M}_{\lambda} $, $ \langle I_{\lambda}'(u),u^{\pm}\rangle=0 $. Then, $ \psi_{u}(1,1)=(0,0) $ and thus $ (1,1) $ is a critical point of $ \varphi_{u} $ in $ [0,+\infty) \times [0,+\infty)$.

Note that, using Lemma \ref{Lem2}, we obtain
\begin{eqnarray}
\varphi _{u}(t,\theta)&=&\frac{1}{2}\|tu^{+}+\theta u^{-}\|_{\mathbb{H}_{0}^{s}(\Omega)}^{2}-\frac{1}{2_{s}^{*}}\int _{\Omega}|tu^{+}+\theta u^{-}|^{2_{s}^{*}}dx-\lambda \int _{\Omega}F(x,tu^{+}+\theta u^{-})dx \nonumber\\
&=&\frac{1}{2}t^{2}\|u^{+}\|_{\mathbb{H}_{0}^{s}(\Omega)}^{2}+\frac{1}{2}\theta ^{2}\|u^{-}\|_{\mathbb{H}_{0}^{s}(\Omega)}^{2}-2t\theta \int _{\mathbb{R}^{2N}}\frac{u^{+}(x)u^{-}(y)}{|x-y|^{N+2s}}dx dy\nonumber\\
&&-\frac{1}{2_{s}^{*}}t^{2_{s}^{*}}\int _{\Omega} |u^{+}|^{2_{s}^{*}}dx-\frac{1}{2_{s}^{*}} \theta^{2_{s}^{*}}\int _{\Omega}|u^{-}|^{2_{s}^{*}}dx\nonumber\\
&&-\lambda \int _{\Omega}F(x,tu^{+})dx -\lambda \int _{\Omega}F(x,\theta u^{-})dx\nonumber\\
&\leq&\frac{1}{2}t^{2}\|u^{+}\|_{\mathbb{H}_{0}^{s}(\Omega)}^{2}+\frac{1}{2}\theta ^{2}\|u^{-}\|_{\mathbb{H}_{0}^{s}(\Omega)}^{2}-2t \theta \int_{\mathbb{R}^{2N}}\frac{u^{+}(x)u^{-}(y)}{|x-y|^{N+2s}}dx dy\nonumber\\
&& -\frac{1}{2_{s}^{*}}\int _{\Omega}|u^{+}|^{2_{s}^{*}}dx-\frac{1}{2_{s}^{*}}\theta ^{2_{s}^{*}}\int _{\Omega}|u^{-}|^{2_{s}^{*}}dx\nonumber\\
&&-\lambda c_{1} t^{\mu} \int _{\Omega}|u^{+}|^{\mu}dx-\lambda c_{1}\theta ^{\mu}\int _{\Omega}|u^{-}|^{\mu}dx+ 2c_{2}\lvert\Omega\rvert.\label{Equa9}
\end{eqnarray}
Since $ \mu >2 $ and $ 2_{s}^{*}>2 $, it follows that
\begin{equation*}
\lim _{|(t,\theta)|\rightarrow +\infty}\varphi_{u}(t,\theta)= -\infty.
\end{equation*}

Provided that $ \varphi_{u} $ is continuous, we have that $ \varphi_{u} $ has a global maximum $ (t_{u},\theta_{u}) $ in $ [0,+ \infty)\times [0,+\infty) $. Our objective now is to show that $ (t_{u},\theta_{u})=(1,1) $ and this maximum is strict.

\noindent \textbf{Claim 1.} $ t_{u}>0 $ and $ \theta_{u}>0 $.

Suppose that $ \theta_{u}=0 $. Thus, $ t_{u}\neq 0 $ and $ \langle I_{\lambda}'(t_{u}u^{+}),t_{u}u^{+}\rangle=0 $. Then,

\begin{equation}\label{Equa10}
t_{u}^{2}\|u^{+}\|_{\mathbb{H}_{0}^{s}(\Omega)}^{2}=t_{u}^{2_{s}^{*}}\int _{\Omega}|u^{+}|^{2_{s}^{*}}dx+\lambda \int _{\Omega}f(x,t_{u}u^{+})t_{u}u^{+}dx.
\end{equation}

Since $ \langle I_{\lambda}'(u^{+}),u^{+}\rangle \leq \langle I_{\lambda}'(u),u^{+}\rangle=0$, we obtain
\begin{equation}\label{Equa11}
\|u^{+}\|_{\mathbb{H}_{0}^{s}(\Omega)}^{2}\leq \int _{\Omega}|u^{+}|^{2_{s}^{*}}dx + \lambda \int _{\Omega} f(x,u^{+})u^{+}dx .
\end{equation}

Therefore, by \eqref{Equa10} and \eqref{Equa11}, we have
\begin{equation}\label{Equa12}
\begin{array}{c}
{\displaystyle\left( 1-\frac{1}{t_{u}^{2}}\right) \|u^{+}\|_{\mathbb{H}_{0}^{s}(\Omega)}^{2}-\left (1-t_{u}^{2_{s}^{*}-2}\right )\int _{\Omega}|u^{+}|^{2_{s}^{*}}dx}
\\ \vspace{-.3cm} \\
\hspace{2cm}
\leq \lambda {\displaystyle\int _{\Omega}\left (\dfrac{f(x,u^{+})u^{+}}{(u^{+})^{2}}-\frac{f(x,t_{u}u^{+})t_{u}u^{+}}{(t_{u}u^{+})^{2}}\right )(u^{+})^{2}dx}.
\end{array}
\end{equation}
Assumption \hyperlink{4}{(H4)} and inequality \eqref{Equa12} imply that $ t_{u} \leq 1 $.

By Remark \ref{Obs1}, $ H(x,t)=f(x,t)t-2F(x,t)>0 $, for a.e. $ x\in \Omega $ and all $ t\neq 0,$ and also $ H(x,t) $ is increasing in $ t>0 $ for a.e. $ x\in \Omega $. In this case, we have
\begin{eqnarray*}
\varphi_{u}(t_{u},0)&=& I_{\lambda}(t_{u}u^{+})-\frac{1}{2}\langle I_{\lambda}'(t_{u}u^{+}), t_{u}u^{+}\rangle\\
&=&\left (\frac{1}{2}-\frac{1}{2_{s}^{*}}\right )\int _{\Omega}|t_{u}u^{+}|^{2_{s}^{*}}dx +\frac{\lambda}{2}\int _{\Omega}\left (f(x,t_{u}u^{+})t_{u}u^{+}-2F(x,t_{u}u^{+})\right )dx\\
&\leq& \left (\frac{1}{2}-\frac{1}{2_{s}^{*}}\right )\int _{\Omega}|u^{+}|^{2_{s}^{*}}dx +\frac{\lambda}{2}\int _{\Omega}\left(f(x,u^{+})u^{+}-2F(x,u^{+})\right)dx \\
&<&\left (\frac{1}{2}-\frac{1}{2_{s}^{*}}\right )\int _{\Omega}|u|^{2_{s}^{*}}dx+\frac{\lambda}{2}\int _{\Omega}\left (f(x,u^{+})u^{+}-2F(x,u^{+})\right )dx\\
&&\frac{\lambda}{2}\int _{\Omega}\left (f(x,u^{-})u^{-}-2F(x,u^{-})\right )dx\\
&=&\frac{1}{2}\|u\|_{\mathbb{H}_{0}^{s}(\Omega)}^{2}-\frac{1}{2_{s}^{*}}\int _{\Omega}|u|^{2_{s}^{*}}dx-\lambda \int _{\Omega}F(x,u)dx\\
&&-\frac{1}{2}\left (\|u\|_{\mathbb{H}_{0}^{s}(\Omega)}^{2}-\int _{\Omega}|u|^{2_{s}^{*}}dx-\lambda \int _{\Omega}f(x,u)u dx\right )\\
&=&I_{\lambda}(u)-\frac{1}{2}\langle I_{\lambda}'(u),u\rangle\\
&=&I_{\lambda}(u)=\varphi_{u}(1,1),
\end{eqnarray*}
which is a contradiction, because, in this case, $ (t_{u},0) $ is a point of maximum of $ \varphi_{u} $. Thus, $ \theta_{u}\neq 0 $. Analogously, it shows that $ t_{u}\neq 0 $.

\noindent \textbf{Claim 2.} $ t_{u} \leq 1$ and $ \theta_{u} \leq 1$.

Since $ (t_{u},\theta_{u})\in (0,+\infty)\times (0,+\infty)$, we have that $ (t_{u},\theta_{u}) $ is a critical point of $ \varphi_{u} $. Then, $ \psi_{u}(t_{u},\theta_{u})=(0,0) $ and, thus,
\begin{equation}\label{Equa13}
t_{u}^{2}\|u^{+}\|_{\mathbb{H}_{0}^{s}(\Omega)}^{2}-2t_{u}\theta_{u}\int_{\mathbb{R}^{2N}}\frac{u^{+}(x)u^{-}(y)}{|x-y|^{N+2s}}dx dy=t_{u}^{2_{s}^{*}}\int _{\Omega}|u^{+}|^{2_{s}^{*}}dx+\int _{\Omega}f(x,t_{u}u^{+})t_{u}u^{+}dx,
\end{equation}
\begin{equation*}
\theta_{u}^{2}\|u^{-}\|_{\mathbb{H}_{0}^{s}(\Omega)}^{2}-2t_{u}\theta_{u}\int_{\mathbb{R}^{2N}}\frac{u^{+}(x)u^{-}(y)}{|x-y|^{N+2s}}dx dy=\theta_{u}^{2_{s}^{*}}\int _{\Omega}|u^{-}|^{2_{s}^{*}}dx+\int _{\Omega}f(x,\theta_{u}u^{-})\theta_{u}u^{-}dx .
\end{equation*}

We suppose, without loss of generality, that $ t_{u}\geq \theta_{u} $. Then, using \eqref{Equa13},
\begin{equation}\label{Equa14}
\lambda \int _{\Omega}f(x,t_{u}u^{+})t_{u}u^{+}dx\leq t_{u}^{2}\|u^{+}\|_{\mathbb{H}_{0}^{s}(\Omega)}-2t_{u}^{2}\int _{\mathbb{R}^{2N}}\frac{u^{+}(x)u^{-}(y)}{|x-y|^{N+2s}}dx dy- \frac{1}{2_{s}^{*}}t_{u}^{2_{s}^{*}}\int _{\Omega}|u^{+}|^{2_{s}^{*}}dx.
\end{equation}

On the other hand, since that $ \langle I_{\lambda}'(u),u^{+}\rangle=0 $, 
\begin{equation}\label{Equa15}
\lambda \int _{\Omega}f(x,u^{+})u^{+}dx=\|u^{+}\|_{\mathbb{H}_{0}^{s}(\Omega)}^{2}-2\int _{\mathbb{R}^{2N}}\frac{u^{+}(x)u^{-}(y)}{|x-y|^{N+2s}}dx dy-\int _{\Omega}|u^{+}|^{2_{s}^{*}}dx.
\end{equation}

By \eqref{Equa14} and \eqref{Equa15}, we achieve
\begin{equation*}
\lambda \int _{\Omega}\left ( \frac{f(x,t_{u}u^{+})t_{u}u^{+}}{(t_{u}u^{+})^{2}}-\frac{f(x,u^{+})u^{+}}{(u^{+})^{2}}\right )(u^{+})^{2}dx\leq (1-t_{u}^{2_{s}^{*}-2})\int _{\Omega}|u^{+}|^{2_{s}^{*}}dx .
\end{equation*}
Again, by \hyperlink{4}{(H4)}, it follows that $ t_{u}\leq 1 $.

To conclude the proof of (i), it remains to show that $ \varphi_{u} $ does not have global maximum $ (t_{u}, \theta_{u}) $ in $ [0,1]\times [0,1]\setminus \{(1,1)\}  $. Indeed, supposing by contradiction that   $ (t_{u}, \theta_{u}) $ in $ [0,1]\times [0,1]\setminus \{(1,1)\}$ is a global maximum of $ \varphi_{u},$ we obtain
\begin{eqnarray*}
\varphi_{u}(t_{u},\theta_{u})&=&I_{\lambda}(t_{u}u^{+}+\theta_{u}u^{-})-\frac{1}{2}\langle I_{\lambda}'(t_{u}u^{+}+\theta_{u}u^{-}),t_{u}u^{+}+\theta_{u}u^{-}\rangle\\
&=&\left (\frac{1}{2}-\frac{1}{2_{s}^{*}}\right )\left (t_{u}^{2_{s}^{*}}\int _{\Omega}|u^{+}|^{2_{s}^{*}}dx+\theta_{u}^{2_{s}^{*}}\int _{\Omega}|u^{-}|^{2_{s}^{*}}dx\right )\\
&&+\frac{\lambda}{2}\int _{\Omega}f(x,t_{u}u^{+})t_{u}u^{+}-2F(x,t_{u}u^{+})dx\\
&&+\frac{\lambda}{2}\int _{\Omega}f(x,\theta_{u}u^{-})\theta_{u}u^{-}-2F(x,\theta_{u}u^{-})dx\\
&<&\left (\frac{1}{2}-\frac{1}{2_{s}^{*}} \right )\left (\int _{\Omega}|u^{+}|^{2_{s}^{*}}dx+\int_{\Omega}|u^{-}|^{2_{s}^{*}}dx\right)\\
&&+\frac{\lambda}{2}\int _{\Omega}f(x,u^{+})u^{+}-2F(x,u^{+})dx\\
&&+\frac{\lambda}{2}\int _{\Omega}f(x,u^{-})u^{-}-2F(x,u^{-})u^{-}dx\\
&=&\frac{1}{2}\|u\|_{\mathbb{H}_{0}^{s}(\Omega)}^{2}-\frac{1}{2_{s}^{*}}\int _{\Omega}|u^{+}|^{2_{s}^{*}}dx-\lambda \int _{\Omega}F(x,u)dx\\
&&-\frac{1}{2}\|u\|_{\mathbb{H}_{0}^{s}(\Omega)}^{2}-\frac{1}{2}\int _{\Omega}|u|^{2_{s}^{*}}dx-\frac{\lambda}{2}\int _{\Omega}f(x,u)u dx\\
&=&I_{\lambda}(u)-\frac{1}{2}\langle I_{\lambda}'(u),u\rangle=I_{\lambda}(u)=\varphi_{u}(1,1),
\end{eqnarray*}
which is a contradiction, because $ (t_{u},\theta_{u}) $ is a point of maximum of $ \varphi_{u} $. Hence, $ (1,1) $ is a strict maximum of $ \varphi_{u} $ and the proof of the item (i) is concluded.

Now, we proof the item (ii). Across some calculus, it is shows that 
\begin{equation*}
\partial _{t,t} ^{2}\varphi_{u}(t,\theta)=\|u^{+}\|_{\mathbb{H}_{0}^{s}(\Omega)}^{2}-(2_{s}^{*}-1)t^{2_{s}^{*}-2}\int _{\Omega}|u^{+}|^{2_{s}^{*}}dx-\lambda \int _{\Omega}\partial_{t}f(x,tu^{+})(u^{+})^{2}dx,
\end{equation*} 
\begin{equation*}
\partial _{\theta,\theta} ^{2}\varphi_{u}(t,\theta)=\|u^{-}\|_{\mathbb{H}_{0}^{s}(\Omega)}^{2}-(2_{s}^{*}-1)\theta^{2_{s}^{*}-2}\int _{\Omega}|u^{-}|^{2_{s}^{*}}dx-\lambda \int _{\Omega}\partial_{\theta}f(x,\theta u^{-})(u^{-})^{2}dx,
\end{equation*}
\begin{equation*}
\partial_{t,\theta}^{2}\varphi_{u}(t,\theta)=-2\int_ {\mathbb{R}^{2N}}\frac{u^{+}(x)u^{-}(y)}{|x-y|^{N+2s}}dx dy.
\end{equation*}

Now, note that
\begin{equation*}
J_{(1,1)}\psi_{u}=\left [
\begin{array}{llcc}
\partial_{t,t}^{2} \varphi_{u}(1,1) & \partial_{t,\theta}^{2} \varphi_{u}(1,1)\\
\partial_{\theta,t}^{2} \varphi_{u}(1,1) & \partial_{\theta,\theta}^{2} \varphi_{u}(1,1)
\end{array}
\right ].
\end{equation*}
Thus, using that $ u\in \mathcal{M}_{\lambda} $, we obtain
\begin{eqnarray*}
\det J_{(1,1)}\psi_{u}&=&\partial_{t,t}^{2} \varphi_{u}(1,1) \partial_{\theta,\theta}^{2} \varphi_{u}(1,1)- \partial_{t,\theta}^{2} \varphi_{u}(1,1)  \partial_{\theta,t}^{2} \varphi_{u}(1,1)\\
&=&\left (\|u^{+}\|_{\mathbb{H}_{0}^{s}(\Omega)}^{2}-(2_{s}^{*}-1)\int _{\Omega} |u^{+}|^{2_{s}^{*}}dx-\lambda \int _{\Omega}\partial _{t}f(x,u^{+})(u^{+})^{2}dx\right )\\
&& \times \left (\|u^{-}\|_{\mathbb{H}_{0}^{s}(\Omega)}^{2}-(2_{s}^{*}-1)\int _{\Omega}|u^{-}|^{2_{s}^{*}}dx-\lambda \int _{\Omega}\partial _{\theta}f((x,u^{-})(u^{-})dx\right )\\
&&-4\left (\int_{\mathbb{R}^{2N}}\frac{u^{+}(x)u^{-}(y)}{|x-y|^{N+2s}}dx dy\right )^{2}\\
&=&\left (2\int_{\mathbb{R}^{2N}}\frac{u^{+}(x)u^{-}(y)}{|x-y|^{N+2s}}dx dy-(2_{s}^{*}-2)\int _{\Omega} |u^{+}|^{2_{s}^{*}}dx - \lambda \int _{\Omega}\partial _{t}H(x,u^{+})u^{+}dx\right )\\
&&\times \left (2\int_{\mathbb{R}^{2N}}\frac{u^{-}(x)u^{-}(y)}{|x-y|^{N+2s}}dx dy-(2_{s}^{*}-2)\int _{\Omega} |u^{-}|^{2_{s}^{*}}dx - \lambda \int _{\Omega}\partial _{\theta} H(x,u^{-})u^{-} dx\right )\\
&&-4\left (\int _{\mathbb{R}^{2N}}\frac{u^{+}(x)u^{-}(y)}{|x-y|^{N+2s}}dx dy\right )^{2}>0,
\end{eqnarray*}
where $ H(x,t) $ was defined in Remark \ref{Obs1}. Thus, the proof of the item (ii) is concluded.
\end{proof}

\begin{lemma}\label{Lem7}
If $ \{u_{j}\} $ is bounded in $ \mathbb{H}_{0}^{s}(\Omega) $ and there exist $ u\in \mathbb{H}_{0}^{s}(\Omega) $ such that $ u_{j}(x)\rightarrow u(x)$, a.e. $ x\in \Omega $, then
\begin{equation*}
\lim_{j\rightarrow \infty}\|u_{j}\|_{\mathbb{H}_{0}^{s}(\Omega)}^{2}=\lim_{j\rightarrow \infty}\|u_{j}-u\|_{\mathbb{H}_{0}^{s}(\Omega)}^{2}+ \|u\|_{\mathbb{H}_{0}^{s}(\Omega)}^{2}.
\end{equation*} 
\end{lemma}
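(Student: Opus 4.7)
The key observation is that this is simply an orthogonality/cross-term calculation for the Gagliardo inner product, not the deeper Brezis--Lieb argument that one needs for $L^p$ norms. The plan is to expand the squared norm algebraically and then kill the cross term via weak convergence.

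First, I would set $v_j := u_j - u$, so that $u_j = v_j + u$ and
\begin{equation*}
|u_j(x)-u_j(y)|^2 = |v_j(x)-v_j(y)|^2 + 2\bigl(v_j(x)-v_j(y)\bigr)\bigl(u(x)-u(y)\bigr) + |u(x)-u(y)|^2.
\end{equation*}
Dividing by $|x-y|^{N+2s}$ and integrating over $\mathbb{R}^{2N}$ (Fubini is fine because each of the three resulting integrands is nonnegative or integrable by boundedness of $\{u_j\}$ in $\mathbb{H}_0^s(\Omega)$) yields the identity
\begin{equation*}
\|u_j\|_{\mathbb{H}_0^s(\Omega)}^2 = \|u_j-u\|_{\mathbb{H}_0^s(\Omega)}^2 + 2\,\langle u_j-u,\,u\rangle_{\mathbb{H}_0^s(\Omega)} + \|u\|_{\mathbb{H}_0^s(\Omega)}^2.
\end{equation*}
Thus the lemma reduces to showing that the cross term $\langle u_j-u,u\rangle_{\mathbb{H}_0^s(\Omega)}$ tends to $0$ as $j\to\infty$.

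Next, I would establish weak convergence $u_j \rightharpoonup u$ in $\mathbb{H}_0^s(\Omega)$. Since $\{u_j\}$ is bounded in the reflexive Hilbert space $\mathbb{H}_0^s(\Omega)$, every subsequence has a further subsequence that converges weakly to some $w\in \mathbb{H}_0^s(\Omega)$. By the compact embedding $\mathbb{H}_0^s(\Omega)\hookrightarrow L^2(\Omega)$ recalled in the introduction, that further subsequence also converges in $L^2(\Omega)$ and, passing to a sub-subsequence, pointwise a.e.\ to $w$. Combined with the hypothesis $u_j\to u$ a.e., this forces $w=u$. By the standard subsequence principle, the whole sequence satisfies $u_j\rightharpoonup u$ in $\mathbb{H}_0^s(\Omega)$.

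Finally, since $u\in \mathbb{H}_0^s(\Omega)$ the linear functional $v\mapsto \langle v,u\rangle_{\mathbb{H}_0^s(\Omega)}$ is continuous, so weak convergence $u_j\rightharpoonup u$ gives $\langle u_j-u,u\rangle_{\mathbb{H}_0^s(\Omega)}\to 0$. Substituting into the identity above and taking limits concludes the proof. There is no real obstacle here; the only point that requires a modicum of care is justifying the termwise integration of the cross term in the expansion of $|u_j(x)-u_j(y)|^2$, which is handled by Cauchy--Schwarz against the Gagliardo kernel together with the uniform bound $\sup_j\|u_j\|_{\mathbb{H}_0^s(\Omega)}<\infty$.
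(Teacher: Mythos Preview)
Your argument is correct. The paper does not give its own proof of this lemma; it simply cites \cite[Lemma~3.2]{I}. Your proof is the natural Hilbert--space version of the Brezis--Lieb identity: expand $\|u_j\|^2$ via the inner product, reduce the statement to the vanishing of the cross term $\langle u_j-u,u\rangle_{\mathbb{H}_0^s(\Omega)}$, and deduce this from $u_j\rightharpoonup u$, which you correctly identify from boundedness plus the a.e.\ hypothesis via the compact embedding into $L^2(\Omega)$. The cited reference proceeds instead by viewing $w_j(x,y)=(u_j(x)-u_j(y))/|x-y|^{(N+2s)/2}$ as a bounded sequence in $L^2(\mathbb{R}^{2N})$ converging a.e.\ to the corresponding $w$, and then invokes the classical Brezis--Lieb lemma; in $L^2$ this amounts to exactly the same weak--convergence computation you carried out, so the two routes coincide. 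One minor remark: the lemma as stated tacitly assumes the individual limits exist, whereas your identity actually proves the sharper fact that $\|u_j\|^2-\|u_j-u\|^2\to\|u\|^2$ regardless; you may wish to say this explicitly.
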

\begin{proof}
See \cite[Lemma 3.2]{I}.
\end{proof}

\section {Proof of Theorem \ref{Theo1}}\label{Section3}

By definition of $ \gamma _{\lambda} $, there exists a sequence $ \{u_{j}\}\subset \mathcal{M}_{\lambda} $ so that 
\begin{equation*}
\lim _{j\rightarrow \infty} I_{\lambda}(u_{j})=\gamma _{\lambda}.
\end{equation*}

From Lemma \ref{Lem5} and boundedness of $ \{I_{\lambda}(u_{j})\} $, there are $ c_{1}>0 $ and $ M>0 $ so that
\begin{equation*}
c_{1}\|u_{j}\|_{\mathbb{H}_{0}^{s}(\Omega)}^{2}\leq I_{\lambda}(u_{j})\leq M.
\end{equation*}
Thus, $ \{u_{j}\} $ is a bounded sequence in $ \mathbb{H}_{0}^{s}(\Omega) $. Then, up to a subsequence, still denoted by $ \{u_{j}\} $, there exist $ u\in \mathbb{H}_{0}^{s}(\Omega) $ so that
\begin{equation*}
u_{j}\rightharpoonup u.
\end{equation*}
Moreover, since the embedding $ \mathbb{H}_{0}^{s}(\Omega) \hookrightarrow L^{r}(\Omega) $ is compact, for all $ r\in [1,2_{s}^{*}) $, we have
\begin{equation*}
\begin{array}{lllc}
u_{j}\rightarrow u\, \operatorname{in}\, L^{r}(\Omega),\;
u_{j}(x)\rightarrow u(x)\, \operatorname{a.e.} \, x\in \Omega,\;
|u_{j}|\leq g,\, g \in L^{r}(\Omega).
\end{array}
\end{equation*}
Therefore, we deduce
\begin{equation*}
\begin{array}{llllc}
u_{j}^{\pm}\rightharpoonup u^{\pm}\, \operatorname{in} \, \mathbb{H}_{0}^{s}(\Omega),\;
u_{j}^{\pm}\rightarrow u^{\pm}\, \operatorname{in}\, L^{r}(\Omega),\;
u_{j}^{\pm}(x)\rightarrow u^{\pm}(x)\, \operatorname{a.e.} \, x\in \Omega,\;
|u_{j}^{\pm}|\leq g.
\end{array}
\end{equation*}

We consider the constant $ \beta=\frac{s}{N}S_{s}^{\frac{N}{2s}} $, where $ S_{s} $ was defined in \eqref{EquabestC}. By Proposition \ref{Prop1}, there exist $ \lambda _{*}>0 $ so that $ \gamma _{\lambda}<\beta $, for all $ \lambda \geq\lambda_{*} $.

We fix $ \lambda \geq \lambda _{*} $. By Lemma \ref{Lem6}, item (i), we obtain
\begin{equation*}
I_{\lambda}(t u_{j}^{+}+\theta u_{j}^{-})\leq I_{\lambda}(u_{j}),
\end{equation*}
for all $ t\geq 0 $ and all $ \theta \geq 0 $.

Note that
\begin{eqnarray*}
I_{\lambda}(t u_{j}^{+}+\theta u_{j}^{-})&=&\frac{t^{2}}{2}\|u_{j}^{+}\|_{\mathbb{H}_{0}^{}(\Omega)}^{2}+\frac{\theta ^{2}}{2}\|u_{j}^{-}\|_{\mathbb{H}_{0}^{}(\Omega)}^{2}-2t \theta \int _{\mathbb{R}^{2N}}\frac{u_{j}^{+}(x)u_{j}^{-}(y)}{|x-y|^{N+2s}}dx dy\\
&&-\frac{t^{2_{s}^{*}}}{2_{s}^{*}}\|u_{j}^{+}\|_{L^{2_{s}^{*}}(\Omega)}^{2_{s}^{*}}-\frac{\theta^{2_{s}^{*}}}{2_{s}^{*}}\|u_{j}^{-}\|_{L^{2_{s}^{*}}(\Omega)}^{2_{s}^{*}}\\
&&-\int _{\Omega}F(x,t u_{j}^{+})dx-\int _{\Omega}F(x,\theta u_{j}^{-})dx .
\end{eqnarray*}
By  using Brezis-Lieb Lemma \cite[Theorem 1]{J}, Lemma \ref{Lem7},  Fatou's Lemma, and Lemma \ref{Lem1}, we obtain
\begin{eqnarray*}
\liminf _{j\rightarrow \infty} I_{\lambda}(tu_{j}^{+}+\theta u_{j}^{-})&=&\frac{t^{2}}{2}\lim _{j\rightarrow \infty}\left (\|u_{j}^{+}-u^{+}\|_{\mathbb{H}_{0}^{s}(\Omega)}^{2}+\|u^{+}\|_{\mathbb{H}_{0}^{s}(\Omega)}^{2}\right )\\
&& +\frac{\theta^{2}}{2}\lim _{j\rightarrow \infty}\left (\|u_{j}^{-}-u^{-}\|_{\mathbb{H}_{0}^{s}(\Omega)}^{2}+\|u^{-}\|_{\mathbb{H}_{0}^{s}(\Omega)}^{2}\right )\\
&&+\liminf _{j\rightarrow \infty} \left (-2t\theta \int _{\mathbb{R}^{2N}}\frac{u_{j}^{+}(x)u_{j}^{-}(y)}{|x-y|^{N+2s}}dx dy\right )\\
&&-\frac{t^{2_{s}^{*}}}{2_{s}^{*}} \lim _{j\rightarrow \infty}\left (\|u_{j}^{+}-u^{+}\|_{L^{2_{s}^{*}}(\Omega)}^{2_{s}^{*}}+\|u^{+}\|_{L^{2_{s}^{*}}(\Omega)}^{2_{s}^{*}}\right )\\
&&-\frac{\theta^{2_{s}^{*}}}{2_{s}^{*}} \lim _{j\rightarrow \infty}\left (\|u_{j}^{-}-u^{-}\|_{L^{2_{s}^{*}}(\Omega)}^{2_{s}^{*}}+\|u^{-}\|_{L^{2_{s}^{*}}(\Omega)}^{2_{s}^{*}}\right )\\
&&-\int _{\Omega}F(x,t u^{+})dx-\int _{\Omega}F(x,\theta u^{-})dx\\
&\geq&\frac{t^{2}}{2}\lim _{j\rightarrow \infty}\|u_{j}^{+}-u^{+}\|_{\mathbb{H}_{0}^{s}(\Omega)}^{2}-\frac{t^{2_{s}^{*}}}{2_{s}^{*}}\lim _{j\rightarrow \infty}\|u_{j}^{+}-u^{+}\|_{L^{2_{s}^{*}}(\Omega)}^{2_{s}^{*}}\\
&&+\frac{\theta ^{2}}{2}\lim _{j\rightarrow \infty}\|u_{j}^{-}-u^{-}\|_{\mathbb{H}_{0}^{s}(\Omega)}^{2}-\frac{\theta ^{2_{s}^{*}}}{2_{s}^{*}}\lim _{j\rightarrow\infty}\|u_{j}^{-}-u^{-}\|_{L^{2_{s}^{*}}(\Omega)}^{2_{s}^{*}}\\
&&+\frac{t^{2}}{2}\|u^{+}\|_{\mathbb{H}_{0}^{s}(\Omega)}^{2}+\frac{\theta^{2}}{2}\|u^{-}\|_{\mathbb{H}_{0}^{s}(\Omega)}^{2} -2t\theta \int _{\mathbb{R}^{2N}}\frac{u^{+}(x)u^{-}(y)}{|x-y|^{N+2s}}dx dy\\
&&-\frac{t^{2_{s}^{*}}}{2_{s}^{*}}\|u^{+}\|_{L^{2_{s}^{*}}(\Omega)}^{2_{s}^{*}}-\frac{\theta ^{2_{s}^{*}}}{2_{s}^{*}}\|u^{-}\|_{L^{2_{s}^{*}}(\Omega)}^{2_{s}^{*}}-\int _{\Omega}F(x,t u^{+}+\theta u^{-})dx\\
&=&A^{+}\frac{t^{2}}{2}-B^{+}\frac{t ^{2_{s}^{*}}}{2_{s}^{*}}+A^{-}\frac{\theta ^{2}}{2}-B^{-}\frac{\theta ^{2_{s}^{*}}}{2_{s}^{*}}+I_{\lambda}(t u^{+}+\theta u^{-}),
\end{eqnarray*}
where 
\begin{equation*}
\begin{array}{llc}
A^{\pm}=\lim \limits_{j\rightarrow \infty} \|u_{j}^{\pm}-u^{\pm}\|_{\mathbb{H}_{0}^{s}(\Omega)}^{2},\\
B^{\pm}=\lim \limits_{j \rightarrow \infty} \|u_{j}^{\pm}-u^{\pm}\|_{L^{2_{s}^{*}}(\Omega)}.
\end{array}
\end{equation*}
Therefore, we achieve
\begin{equation}\label{Equa16}
I_{\lambda}(t u^{+}+\theta u^{-})+A^{+}\frac{t^{2}}{2}-B^{+}\frac{t ^{2_{s}^{*}}}{2_{s}^{*}}+A^{-}\frac{\theta ^{2}}{2}-B^{-}\frac{\theta ^{2_{s}^{*}}}{2_{s}^{*}}\leq \gamma _{\lambda},
\end{equation}
for all $ t\geq 0 $ and all $ \theta \geq 0 $.

\noindent \textbf{Claim 1.} $ u^{+}\neq 0 $ and $ u^{-}\neq 0 $.

We will only show that $ u^{+}\neq 0 $, because to show that $ u^{-}\neq 0$ it is analogous. Suppose, by contradiction, that $ u^{+}=0 $. Then, we need to analyse two cases:

\noindent \emph{Case 1:} $ B^{+}=0 $.

In this case, if $ A^{+}=0 $, by Lemma \ref{Lem4}, we obtain $ \|u^{+}\|_{\mathbb{H}_{0}^{s}(\Omega)}>0 $, which contradicts our supposition. If $ A^{+}>0 $, by \eqref{Equa16}, $ A^{+}\frac{t^{2}}{2}\leq \gamma _{\lambda} $, for all $ t\geq 0 $, which is false. Anyway, we have a contradiction. 

\noindent\emph{Case 2:} $ B^{+}>0 $.

In this case, by definition of $ S_{s},$ it follows that
\begin{equation*}
\beta =\frac{s}{N}S_{s}^{\frac{N}{2s}}\leq \frac{s}{N}\left (\frac{A^{+}}{(B^{+})^{\frac{2}{2_{s}^{*}}}}\right )^{\frac{N}{2s}}.
\end{equation*}
On the other hand, 
\begin{equation*}
\frac{s}{N}\left (\frac{A^{+}}{(B^{+})^{\frac{2}{2_{s}^{*}}}}\right )^{\frac{N}{2s}}=\max_{t\geq 0} \left (A^{+}\frac{t^{2}}{2}-B^{+}\frac{t^{2_{s}^{*}}}{2_{s}^{*}}\right ).
\end{equation*}
Thus, recalling that $ \gamma_{\lambda}<\beta $ and using \eqref{Equa16}, it follows that
\begin{equation*}
\beta \leq \max_{t\geq 0} \left (A^{+}\frac{t^{2}}{2}-B^{+}\frac{t^{2_{s}^{*}}}{2_{s}^{*}}\right )<\beta,
\end{equation*}
which is a contradiction. Therefore, we conclude that $ u^{+}\neq 0 $.

\noindent \textbf{Claim 2.} $ B^{+}=0 $ and $ B^{-}=0 $.

As in claim 1, we will only show that $ B^{+}=0 $, because to show that $ B^{-}=0 $ it is similar. We suppose, by contradiction, that $ B^{+}>0 $ and we consider the following cases:

\noindent \emph{Case 1:} $ B^{-}>0 $.

Let $ t_{\max} $ and $ \theta_{\max} $ be so that
\begin{equation*}
\max_{t\geq 0}\left (A^{+}\frac{t^{2}}{2}-B^{+}\frac{t^{2_{s}^{*}}}{2_{s}^{*}}\right )=A^{+}\frac{t_{\max}^{2}}{2}-B^{+}\frac{t_{\max}^{2_{s}^{*}}}{2_{s}^{*}},
\end{equation*}

\begin{equation*}
\max_{\theta\geq 0}\left (A^{-}\frac{\theta^{2}}{2}-B^{-}\frac{\theta^{2_{s}^{*}}}{2_{s}^{*}}\right )=A^{-}\frac{\theta_{\max}^{2}}{2}-B^{-}\frac{\theta_{\max}^{2_{s}^{*}}}{2_{s}^{*}}.
\end{equation*}

Since $  [0,t_{\max} ]\times  [0,\theta _{\max} ] $ is compact and $ \varphi _{u} $ is continuous, there exist $  (t_{u},\theta_{u} )\in  [0,t_{\max} ]\times  [0,\theta _{\max} ] $ so that 

\begin{equation*}
\varphi _{u}(t_{u},\theta_{u})=\max _{(t,\theta)\in [0,t_{\max}]\times [0,\theta _{\max}] } \varphi _{u}(t,\theta ).
\end{equation*}

We will show that $ (t_{u},\theta_{u} )\in  (0,t_{\max} )\times  (0,\theta _{\max} ) $, namely, we will show that $ (t_{u},\theta_{u} )\notin \partial ([0,t_{\max} ]\times  [0,\theta _{\max} ]) $.

Note that, if $ \theta \ll 1 $, then
\begin{eqnarray*}
\varphi_{u}(t,0)=I_{\lambda}(tu^{+})&<&I_{\lambda}(t u^{+})+I_{\lambda}(\theta u^{-})\\
&\leq&I_{\lambda}(tu^{+}+\theta u^{-})=\varphi _{u}(t,\theta),
\end{eqnarray*}
for all $ t\in [0,t_{\max}]$. Hence, there exist $ \theta _{0}\in [0,\theta _{\max}] $ so that
\begin{equation*}
\varphi_{u}(t,0)<\varphi_{u}(t,\theta _{0}),
\end{equation*}
for all $ t\in [0,t_{\max}] $. 

Thus, any point of the form $ (t,0) $, with $ 0\leq t\leq t_{\max} $, is not the maximizer of $ \varphi_{u} $. Hence, $ (t_{u},\theta_{u})\notin [0,t_{\max}]\times \{0\} $. Analogously, it is shows that $ (t_{u},\theta_{u})\notin \{0\}\times [0,\theta_{\max}] $.

Now, across a simple analysis, we obtain
\begin{equation}\label{Equa17}
A^{+}\frac{t^{2}}{2}-B^{+}\frac{t^{2_{s}^{*}}}{2_{s}^{*}}>0 ,
\end{equation}
\begin{equation}\label{Equa18}
A^{-}\frac{\theta^{2}}{2}-B^{-}\frac{\theta^{2_{s}^{*}}}{2_{s}^{*}}> 0,
\end{equation}
for all $ t\in (0,t_{\max}] $ and all $ \theta\in (0,\theta_{\max}] $. Then,
\begin{equation*}
\beta \leq A^{+}\frac{t_{\max}^{2}}{2}-B^{+}\frac{t_{\max}^{2_{s}^{*}}}{2_{s}^{*}}+ A^{-}\frac{\theta^{2}}{2}-B^{-}\frac{\theta^{2_{s}^{*}}}{2_{s}^{*}},
\end{equation*}
\begin{equation*}
\beta\leq A^{+}\frac{t^{2}}{2}-B^{+}\frac{t^{2_{s}^{*}}}{2_{s}^{*}}+ A^{-}\frac{\theta_{\max}^{2}}{2}-B^{-}\frac{\theta_{\max}^{2_{s}^{*}}}{2_{s}^{*}},
\end{equation*}
for all $ t\in [0,t_{\max}] $ and all $ \theta\in [0,\theta_{\max}] $.

Thus, by \eqref{Equa16}, it follows that
\begin{equation*}
\varphi_{u}(t,\theta _{\max})\leq 0,
\end{equation*}
\begin{equation*}
\varphi_{u}(t_{\max},\theta)\leq 0,
\end{equation*}
for all $ t\in [0,t_{\max}] $ and all $ \theta \in [0,\theta_{\max}] $. So, $ (t_{u},\theta_{u})\notin \{t_{\max}\}\times [0,\theta _{\max}] $ and $ (t_{u},\theta _{u})\notin [0,t_{\max}]\times \{\theta _{\max}\} $. Thus, we conclude that $ (t_{u},\theta_{u})\notin \partial ([0,t_{\max}]\times [0,\theta _{\max}]) $.

Being $ (t_{u},\theta_{u}) $ an inner maximizer of $ \varphi_{u} $, it follows that $ (t_{u},\theta_{u}) $ is a critical point of $ \varphi_{u} $. Thus, $ \psi_{u}(t_{u},\theta_{u})=(0,0) $, namely, $ t_{u}u^{+}+\theta_{u}u^{-}\in \mathcal{M}_{\lambda} $, with $ t_{u}\in (0,t_{\max}) $ and $ \theta_{u}\in (0,\theta _{\max}) $.

Therefore, using \eqref{Equa17} and \eqref{Equa18}, we obtain
\begin{eqnarray*}
\gamma_{\lambda}&\geq& A^{+}\frac{(t_{u})^{2}}{2}-B^{+}\frac{(t_{u})^{2_{s}^{*}}}{2_{s}^{*}}+ A^{-}\frac{(\theta_{u})^{2}}{2}-B^{-}\frac{(\theta_{u})^{2_{s}^{*}}}{2_{s}^{*}}+I_{\lambda}(t_{u}u^{+}+\theta_{u}u^{-})\\
&>&I_{\lambda}(t_{u}u^{+}+\theta_{u}u^{-})\geq \gamma _{\lambda},
\end{eqnarray*}
which is a contradiction.

\noindent \emph{Case 2:} $ B^{-}=0 $.

In this case, we can maximize in $ [0,t_{\max}]\times [0,+\infty) $. Indeed, it is possible to show that there exist $ \theta _{0}\in [0,+\infty) $ so that $ I_{\lambda}(tu^{+}+\theta u^{-})\leq 0 $, for all $ (t,\theta)\in [0,t_{\max}]\times [\theta _{0},+\infty) $. Hence, there exist $ (t_{u},\theta_{u})\in [0,t_{\max}]\times [0,+\infty) $ so that
\begin{equation*}
\varphi_{u}(t_{u},\theta_{u})=\max_{(t,\theta)\in [0,t_{\max}]\times [0,+\infty)}\varphi_{u}(t,\theta).
\end{equation*}

We need to ascertain that $ (t_{u},\theta_{u})\in (0,t_{\max})\times (0,+\infty) $.

Since, for $ \theta \ll 1 $, 
\begin{equation*}
\varphi_{u}(t,0)<\varphi_{u}(t,\theta), 
\end{equation*}
for all $ t\in [0,t_{\max}] $, it follows that $ (t_{u},\theta_{u})\notin [0,t_{\max}]\times \{0\} $.
Also, for $ t\ll 1 $, 
\begin{equation*}
\varphi_{u}(0,\theta)<\varphi_{u}(t,\theta),
\end{equation*}
for all $ \theta \in [0,+\infty) $. Then, $ (t_{u},\theta_{u})\notin \{0\}\times [0,+\infty) $.
Moreover, note that
\begin{equation*}
\beta\leq A^{+}\frac{t_{\max}^{2}}{2}-B^{+}\frac{t_{\max}^{2_{s}^{*}}}{2_{s}^{*}}+A^{-}\frac{\theta^{2}}{2},
\end{equation*}
for all $ \theta \in [0,+\infty) $. Then, 
\begin{equation*}
\varphi_{u}(t_{\max},\theta)\leq 0,
\end{equation*}
for all $ \theta \in [0,+\infty) $. Thus, $ (t_{u},\theta_{u})\notin \{t_{\max}\}\times [0,+\infty) $.
Hence, $ (t_{u},\theta_{u})\in (0,t_{\max})\times (0,+\infty) $. Consequently, $ (t_{u},\theta_{u}) $ is an inner maximizer of $ \varphi_{u} $ in $ [0,t_{\max}]\times [0,+\infty) $ and thus, $ \psi_{u}(t_{u},\theta_{u})=(0,0) $, namely, $ t_{u}u^{+}+\theta_{u}u^{-}\in \mathcal{M}_{\lambda} $.

Therefore, using \eqref{Equa17}, we obtain
\begin{eqnarray*}
\gamma_{\lambda}&\geq& A^{+}\frac{(t_{u})^{2}}{2}-B^{+}\frac{(t_{u})^{2_{s}^{*}}}{2_{s}^{*}}+A^{-}\frac{(\theta_{u})^{2}}{2}+I_{\lambda}(t_{u}u^{+}+\theta_{u}u^{-})\\
&>&I_{\lambda}(t_{u}u^{+}+\theta_{u}u^{-})\geq \gamma _{\lambda},
\end{eqnarray*}
which is a contradiction. Then, $ B^{+}=0,$ which concludes the proof of Claim 2.

\noindent \textbf{Claim 3.} The infimum $ \gamma _{\lambda} $ is achieved.

Since $ u^{+}\neq 0 $ and $ u^{-}\neq 0 $, by Lemma \ref{lem3}, there are $ t_{u}>0 $ and $ \theta_{u}>0 $ so that $ t_{u}u^{+}+\theta_{u}u^{-} \in \mathcal{M}_{\lambda}$. Then, as $ u_{j}\in \mathcal{M}_{\lambda} $, for any $ j\in \mathbb{N} $, by Lemma \ref{Lem6}, item (i),
\begin{equation*}
I_{\lambda}(t_{u}u_{j}^{+}+\theta_{u}u_{j}^{-})\leq I_{\lambda}(u_{j}^{+}+u_{j}^{-})=I_{\lambda}(u_{j}).
\end{equation*}

Since $ B^{\pm}=0 $ and the norm in $ \mathbb{H}_{0}^{s}(\Omega) $ is lower semicontinuous, it follows that
\begin{eqnarray*}
\gamma _{\lambda}&\leq &I_{\lambda}(t_{u}u^{+}+\theta_{u}u^{-})\\
&\leq&\liminf_{j\rightarrow \infty} I_{\lambda}(t_{u}u_{j}^{+}+\theta_{u}u_{j}^{-})\\
&\leq& \liminf _{j\rightarrow \infty}I_{\lambda}(u_{j})=\gamma _{\lambda}.
\end{eqnarray*}
Hence, the infimum $ \gamma _{\lambda} $ is achieved by $ t_{u}u^{+}+\theta_{u}u^{-} \in \mathcal{M}_{\lambda} $, which concludes the proof of Claim 3. 

Using the same arguments presented in \cite[Theorem 1.3]{E}, where Lemma \ref{Lem6} is required, it follows that $ v=t_{u}u^{+}+\theta_{u}u^{-} $ is a critical point of the functional $ I_{\lambda} $. Therefore, $ v\in \mathcal{M}_{\lambda} $ is a sign-changing solution for the equation \eqref{Equa1}.

\bibliographystyle{amsplain}

\begin{thebibliography}{10}
\bibitem {E} C. O. Alves, M. A. S. Souto;\textit{ Existence of least energy nodal solution for a Schr\"odinger-Poisson system in bounded domains}, Z. Angew. Math. Phys.
\textbf{65} (2014), 1153-1166.

\bibitem{A} D. Applebaum; \textit{Levy processes and stochastic calculus} 2nd edn. Cambridge Studies in Advanced Mathematics, vol. 116. Cambridge University Press, Cambridge, 2009.

\bibitem{O} B. Barrios, E. Colorado,  A. de Pablo, U. Sánchez; \textit{On some critical problems for the fractional Laplacian operator}, J. Differential Equations
\textbf{252} (2012), 6133-6162.

\bibitem{Y} T. Bartsch, Z. Liu, T. Weth; \textit{Sign changing solutions of superlinear Schr\"odinger equations}, Commun. Partial Differ.Equ.
\textbf{29} (2004), 25-42.

\bibitem{Z} T. Bartsch, T. Weth; \textit{Three nodal solutions of singularly perturbed elliptic equations
on domains without topology}, Ann. Inst. H. Poincar\'e Anal. Non Lin\'eaire
\textbf{22} (2005), 259-281.

\bibitem {F} G. M. Bisci, V. D. Radulescu, R. Servadei; \textit{Variational methods for nonlocal fractional problems}, vol. 162 of Encyclopedia of Mathematics and its Applications, Cambridge University Press, Cambridge, 2016.

\bibitem{AB} G. M. Bisci, D. Repovs;\textit{ Fractional nonlocal problems involving nonlinearities with
bounded primitive}, J. Math. Anal. Appl.
\textbf{420} (2014), 167-176.

\bibitem{P} C. Br\"{a}ndle, E. Colorado, A. de Pablo, U. S\'anchez\textit{ A concave-convex elliptic problem involving the fractional Laplacian}, The Royal Society of Edinburgh 
\textbf{143A} (2013), 39-71.

\bibitem{J} H. Brezis, E. Lieb; \textit{A relation between pointwise convergence of functions and convergence of functionals}, Proc. Amer. Math. Soc.
\textbf{88} (1983), 486-490.

\bibitem{B} C. Bucur, E. Valdinoci; \textit{Nonlocal diffusion and applications} vol. 1 Springer International Publishing, Switzerland, 2016.

\bibitem{Q} X. Cabr\'e, J. Tan; \textit{Positive solutions of nonlinear problems involving the square root of the Laplacian}, Advances in Mathematics
\textbf{224} (2010),   2052-2093.

\bibitem{L} L. Caffarelli; \textit{Nonlocal equations, drifts and games}, Nonlinear Partial Differential Equations
\textbf{7} (2012), 37-52.

\bibitem{H} L. Caffarelli, L. Silvestre; \textit{An extension problem related to the fractional Laplacian, Comm. Partial Differential Equations} 
\textbf{32} (2007), 1245-1260.

\bibitem{R} X. J. Chang, Z. Q. Wang; \textit{Ground state of scalar field equations involving a fractional Laplacian with general nonlinearity}, Nonlinearity
\textbf{26} (2013),  479-494.

\bibitem{K} X. J. Chang, Z. Q. Wang; \textit{Nodal and multiple solutions of nonlinear problems
involving the fractional Laplacian}, J. Differential Equations
\textbf{256} (2014), 2965-2992.

\bibitem{M} R. Cont, P. Tankov; \textit{Financial modelling with jump processes}, Chapman \& Hall/CRC Financial Mathematics Series, Boca Raton, 2004.

\bibitem{X} P. L. De N\'apoli, J. F. Bonder, A. Silva; \textit{Multiple solutions for the p-Laplace operator with critical growth}, Nonlinear Analysis
\textbf{71} (2009), 6283-6289.

\bibitem {D} E. Di Nezza, G. Palatucci, E. Valdinoci; \textit{Hitchhiker's guide to the fractional Sobolev spaces}, Bull. Sci. math.
\textbf{136} (2012), 521-573.

\bibitem{AA} M. Ferrara, G. Molica Bisci, B. Zhang; \textit{Existence of weak solutions for non-local fractional problems via Morse theory}, Discrete and Continuous Dynamical Systems Series B 
\textbf{19} (2014), 2493-2499.

\bibitem{N} N.  Laskin, \textit{Fractional quantum mechanics and Levy path integrals}, Physics Letters A
\textbf{268} (2000), 298-305.

\bibitem {G} C. Miranda;\textit{Un'osservazione su un teorema di Brouwer}, Bol. Un. Mat. Ital. 
\textbf{3} (1940), 5-7.

\bibitem{I} K. Perera, M. Squassina, Y. Yang; \textit{Bifurcation and multiplicity results for critical fractional
p-Laplacian problems}, Math. Nachr.
\textbf{289} (2016), 332-342.

\bibitem{AC} R. Servadei, E. Valdinoci; \textit{A Brezis-Nirenberg result for non-local critical equations in low dimension}, CPAA
\textbf{12} (2013), 2445-2464.

\bibitem{T} R. Servadei, E. Valdinoci; \textit{Mountain Pass solutions for non-local elliptic operators}, J. Math. Anal. Appl.
\textbf{389} (2012), 887-898.

\bibitem{U} R. Servadei, E. Valdinoci; \textit{Variational methods for non-local operators of elliptic type}, Discrete Contin. Dyn. Syst.
\textbf{33} (2013), 2105-2137.

\bibitem{W} M. Struwe; \textit{Three nontrivial solutions of anticoercive boundary value problems for the pseudo-Laplace-operator}, J. Reine Angew. Math.
\textbf{325} (1981), 68-74.

\bibitem{S} J. Tan; \textit{The Brezis-Nirenberg type problem involving the square root of the Laplacian}, Calculus of Variations
\textbf{42} (2011), 21-41.

\bibitem{V} K. Teng, K. Wang, Ruimeng Wang; \textit{A sign-changing solution for nonlinear problems involving the fractional Laplacian}, EJDE
\textbf{109} (2015), 1-12.
\end{thebibliography}

\end{document}